\newtheorem{lemma}{Lemma}[section]
\newtheorem{theorem}[lemma]{Theorem}
\newtheorem{prop}[lemma]{Proposition}
\newtheorem{cor}[lemma]{Corollary}
\newtheorem{claim*}{Claim}
\newtheorem{defn}[lemma]{Definition}
\newtheorem*{theorem*}{Theorem}
\newtheorem{construction}{Construction}
\theoremstyle{definition}
\newtheorem{remark}[lemma]{Remark}
\newcommand{\A}{{\mathbb A}}
\newcommand{\G}{{\mathbb G}}
\newcommand{\PP}{{\mathbb P}}
\newcommand{\C}{{\mathbb C}}
\newcommand{\F}{{\mathbb F}}
\newcommand{\Q}{{\mathbb Q}}
\newcommand{\R}{{\mathbb R}}
\newcommand{\Z}{{\mathbb Z}}
\newcommand{\Xbar}{{\overline{X}}}
\newcommand{\kbar}{{\overline{k}}}
\newcommand{\Kbar}{{\overline{K}}}
\newcommand{\Ksep}{{K^{\operatorname{sep}}}}
\newcommand{\ksep}{{k^{\operatorname{sep}}}}
\newcommand{\Fbar}{{\overline{\F}}}
\newcommand{\Ybar}{{\overline{Y}}}
\newcommand{\kk}{{\mathbf k}}
\newcommand{\calC}{{\mathcal C}}
\newcommand{\calF}{{\mathcal F}}
\newcommand{\calH}{{\mathcal H}}
\newcommand{\calI}{{\mathcal I}}
\newcommand{\calL}{{\mathcal L}}
\newcommand{\calO}{{\mathcal O}}
\newcommand{\calX}{{\mathcal X}}
\newcommand{\calY}{{\mathcal Y}}
\newcommand{\frakD}{{\mathfrak D}}
\newcommand{\frakS}{{\mathfrak S}}
\newcommand{\scrM}{{\mathscr M}}
\newcommand{\scrO}{{\mathscr O}}
\newcommand{\scrX}{{\mathscr X}}
\newcommand{\hideqed}{\renewcommand{\qed}{}} 
\DeclareMathOperator{\HH}{H}
\DeclareMathOperator{\rk}{rk}
\DeclareMathOperator{\inv}{inv}
\DeclareMathOperator{\im}{im}
\DeclareMathOperator{\Gal}{Gal}
\DeclareMathOperator{\Br}{Br}
\DeclareMathOperator{\Pic}{Pic}
\DeclareMathOperator{\Spec}{Spec}
\DeclareMathOperator{\Proj}{Proj}
\DeclareMathOperator{\ev}{ev}
\DeclareMathOperator{\tors}{tors}
\DeclareMathOperator{\et}{\textrm{\normalfont \'et}}
\DeclareMathOperator{\rank}{rank}
\DeclareMathOperator{\NS}{NS}
\DeclareMathOperator{\Bl}{Bl}
\DeclareMathOperator{\Hilb}{Hilb}
\DeclareMathOperator{\coarse}{coarse}
\DeclareMathOperator{\LambdaK}{\Lambda_{\textup{K3}}}
\newcommand{\isom}{\cong}
\newcommand{\into}{\hookrightarrow}
\numberwithin{equation}{section}
\numberwithin{table}{section}
\newcommand{\defi}[1]{\textsf{#1}} 
\newcommand{\Sbar}{\overline{S}}
\title{Odd order obstructions to the Hasse principle on general K3 surfaces}
\author{Jennifer Berg}
\address{Department of Mathematics, Rice University MS 136, Houston, TX
77251-1892, USA}
\email{jb93@math.rice.edu}
\urladdr{http://math.rice.edu/\~{}jb93}
\author{Anthony V\'arilly-Alvarado}
\address{Department of Mathematics, Rice University MS 136, Houston, TX
77251-1892, USA}
\email{av15@math.rice.edu}
\urladdr{http://math.rice.edu/\~{}av15}
\date{}
\subjclass[2010]{14J28, 14G05, 14J35, 14F22}
\keywords{K$3$ surface, cubic fourfolds, Hasse principle, Brauer-Manin obstruction}
\begin{document}

	\begin{abstract}
		We show that odd order transcendental elements of the Brauer group of a K3 surface can obstruct the Hasse principle. We exhibit a general K3 surface $Y$ of degree 2 over $\Q$ together with a three torsion Brauer class $\alpha$ that is unramified at all primes except for 3, but ramifies at all 3-adic points of $Y$. Motivated by Hodge theory, the pair $(Y, \alpha)$ is constructed from a cubic fourfold $X$ of discriminant 18
		birational to a fibration into sextic del Pezzo surfaces over the projective plane. Notably, our construction does not rely on the presence of a central simple algebra representative for $\alpha$. Instead, we prove that a sufficient condition for such a Brauer class to obstruct the Hasse principle is insolubility of the fourfold $X$ (and hence the fibers) over $\Q_3$ and local solubility at all other primes.
	\end{abstract}

	\maketitle

\section{Introduction}\label{sec:Introduction}
	


The purpose of this paper is to give an arithmetic application of the connection between certain Brauer classes of order $3$ on polarized K3 surfaces of degree 2 and special cubic fourfolds of discriminant 18. We use this connection to construct K3 surfaces over number fields failing to satisfy the Hasse principle.  Although the Hodge theoretic correspondence between special cubic fourfolds and K3 surfaces was systematically studied by Hassett nearly 20 years ago~\cite{Has99}, the results herein rely on the timely work of Addington et.\ al. \cite{AHTVA}, which proved that a general cubic fourfold $X$ of discriminant $18$ contains a sextic elliptic ruled surface. Projecting away from the surface yields a fibration into sextic del Pezzo surfaces over $\PP^2$, out of which one can recover a degree 2 K3 surface $Y$ together with a subgroup $B$ of $\Br(Y)[3]$. Even then, making transcendental Brauer elements explicit is seldom a simple task. Exhibiting a central simple algebra representing an element of $B$ remains elusive without deeper knowledge of sextic del Pezzo surfaces over non-closed fields. Instead, our key insight is to make repeated use of the Lang-Nishimura lemma to translate local invariant computations for Brauer classes on the K3 surface into local solubility information about the total space of the fibration and hence the cubic fourfold $X$ itself. 

\subsection*{Odd order obstructions}
A smooth projective geometrically integral variety $X$ over a number field $k$ is a counterexample to the \defi{Hasse principle} if simultaneously the set $X(\A_k)$ of adelic points of $X$ is nonempty while the set of $k$-points $X(k)$ is empty.  Fix an algebraic closure $\kbar$ of $k$, and let $\Xbar := X \times_k \kbar$. The Brauer group $\Br X := H^2_{\et}(X, \G_m)_{\tors}$ of $X$ contains two important subgroups $\Br_0(X) \subseteq \Br_1(X) \subseteq \Br(X)$ that organize the study of Brauer--Manin obstructions to the Hasse principle (see~\S\ref{sec:InvariantComputations} for details):
	\begin{align*}
		\text{the \defi{constant classes }} \Br_0(X) &:= \im \left(\Br(k) \to \Br(X) \right), \text{and}\\
		\text{the \defi{algebraic classes }} \Br_1(X) &:= \ker \left(\Br(X) \to \Br(\Xbar) \right).
	\end{align*}
Elements of $\Br(X)$ that are not algebraic are called \defi{transcendental}. 

Brauer classes of odd order, whether algebraic or transcendental, had previously only been known to obstruct weak approximation on K3 surfaces \cites{Preu, IS15}. Ieronymou and Skorobogatov~\cite{IS15} showed that  elements of odd order can never obstruct the Hasse principle on smooth diagonal quartics in $\PP^3_\Q$.  Their work prompted them to ask if there exists a locally soluble  K3 surface over a number field with an odd-order Brauer--Manin obstruction to the Hasse principle~\cite{IS15}*{p.~183}. Since then, Skorobogatov and Zarhin~\cite{SZ16} have shown that such classes cannot obstruct the Hasse principle on Kummer surfaces. Recently, Corn and Nakahara~\cite{CN17} gave a positive answer to Ieronymou and Skorobogatov's question, by showing that a $3$-torsion \emph{algebraic} class can obstruct the Hasse principle on a degree 2 K3 surface over $\Q$. 
Our main result shows that $3$-torsion \emph{transcendental} Brauer classes can obstruct the Hasse principle on K3 surfaces.

\begin{theorem} \label{thm: MainThm}
	There exists a K3 surface $Y$ over $\Q$ of degree 2, together with a class $\alpha$ in $\Br Y[3]$ such that
	\[  Y(\A_\Q) \ne \emptyset \qquad \text{and} \qquad Y(\A_\Q)^{\{\alpha\}} = \emptyset. \]
	Moreover, $\Pic \Ybar \isom \Z$, and hence $\Br_1(Y)/\Br_0(Y) = 0$. In particular, there is no algebraic Brauer-Manin obstruction to the Hasse principle on $Y$.
\end{theorem}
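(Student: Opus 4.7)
The plan is to realize the theorem by first exhibiting an explicit cubic fourfold $X \subset \PP^5_\Q$ of discriminant $18$ that is insoluble over $\Q_3$ but has points over $\R$ and over every $\Q_p$ with $p \neq 3$, and then applying the construction of Addington et al.\ to produce the pair $(Y,\alpha)$. Concretely, one searches for a cubic form in six variables over $\Z$ whose reduction mod $3$ is a cone (or otherwise forces $3$-adic insolubility by Hensel/valuation arguments), while exhibiting explicit smooth points at every other place. Discriminant $18$ is a codimension-$1$ condition on the moduli of cubic fourfolds, so such $X$ can be chosen generically; in particular one can arrange that $X$ contains the requisite sextic elliptic ruled surface $T$, and that the associated K3 surface $Y$ has geometric Picard group $\Z$. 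The Picard rank $1$ claim will be verified by a standard reduction argument: pick two primes $p_1, p_2$ of good reduction for $Y$, show that $\rho(Y_{\overline{\F}_{p_i}}) \le 2$ by counting points in small extensions and controlling the characteristic polynomial of Frobenius on $\HH^2_{\et}$, then combine the two Picard lattices to force $\rho(\Ybar) = 1$. Hence $\Br_1(Y)/\Br_0(Y) = 0$ by the Hochschild--Serre sequence.

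Once $Y$ and the Brauer subgroup $B \subseteq \Br(Y)[3]$ are in hand, I pick any nontrivial $\alpha \in B$ and analyze its local invariants. By the Addington--Hassett--Tschinkel--V\'arilly-Alvarado construction, $\alpha$ is the class coming from a fibration $\pi \colon W \to Y$ whose generic fiber is a sextic del Pezzo surface, and $W$ itself is birational to (a blow-up of) $X$. For each prime $p$, a local point $P \in Y(\Q_p)$ must be analyzed via the fiber $\pi^{-1}(P)$: if $\pi^{-1}(P)$ (or some smooth model thereof) has a $\Q_p$-point, Lang--Nishimura then upgrades it to a $\Q_p$-point on $X$ by tracing through the birational map. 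Local solubility of $Y$ is established directly by writing down smooth points on the affine model of $Y$ at every $p$ (including $p = 3$), using Hensel and the smooth locus of the reduction.

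The heart of the proof is the local invariant computation. For $p \neq 3$, I will show $\alpha$ is unramified and $\inv_p(\alpha(P)) = 0$ for every $P \in Y(\Q_p)$; this uses that $\alpha$ has order $3$, that $W \to Y$ is a sextic del Pezzo fibration (whose fibers always have points over any $p$-adic field with $p \neq 3$, at least generically, and split the cyclic algebra of order $3$), and a continuity/constancy argument on the $p$-adic manifold $Y(\Q_p)$. For $p = 3$, the insolubility of $X(\Q_3)$ is transported backwards: by Lang--Nishimura applied to the birational map from a resolution of $W$ to $X$, the nonexistence of $\Q_3$-points on $X$ forces each fiber of $\pi$ above a $\Q_3$-point of $Y$ to have no $\Q_3$-point (on any smooth projective model). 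Since the evaluation of $\alpha$ at $P$ is computed precisely by the non-splitting of such a sextic del Pezzo fiber (a splitting Brauer class on a sextic del Pezzo has a rational point), we conclude $\inv_3(\alpha(P)) \neq 0$, and by continuity this invariant is in fact a constant nonzero element of $\tfrac{1}{3}\Z/\Z$ on the connected components of $Y(\Q_3)$.

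Combining the local computations, the global sum $\sum_v \inv_v(\alpha(P_v)) = \inv_3(\alpha(P_3))$ is a fixed nonzero element of $\tfrac{1}{3}\Z/\Z$ for every adelic point $(P_v) \in Y(\A_\Q)$, so $Y(\A_\Q)^{\{\alpha\}} = \emptyset$ while $Y(\A_\Q) \ne \emptyset$. The hardest step will be the $p=3$ invariant calculation: bridging the gap from ``no $\Q_3$-points on $X$'' to ``constant nonzero invariant of $\alpha$ on all of $Y(\Q_3)$'' rests on a careful use of Lang--Nishimura on the birational correspondence $X \dashrightarrow W \to Y$, together with the fact that vanishing of the cyclic Brauer class on a sextic del Pezzo surface over a local field is equivalent to local solubility. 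A secondary technical difficulty is arranging the input cubic fourfold so that the above local profile holds simultaneously with discriminant exactly $18$ and with smoothness; this will be handled by explicit construction and direct verification rather than by general position arguments.
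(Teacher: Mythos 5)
Your overall strategy is the same as the paper's: build an explicit discriminant-18 cubic fourfold $X$ with $X(\Q_3)=\emptyset$ (the paper does this by taking $3C_1+C_2$ for two cubics in $I(T)$ in disjoint variable triples that are insoluble plane cubics mod $3$, so the reduction mod $3$ is indeed a cone and there are no points mod $9$), pass to the sextic del Pezzo fibration and the degree-2 K3 surface $Y$, transfer $3$-adic insolubility of $X$ to nonvanishing of $\inv_3\alpha(P)$ via blow-downs to Severi--Brauer surfaces and Lang--Nishimura, verify local solubility of $Y$ by point search, and get $\rho(\Ybar)=1$ by the van Luijk/Elsenhans--Jahnel two-prime reduction method. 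Your $p=3$ argument is essentially the paper's Lemma on insoluble fibers (with the density/local-constancy step to cover points over the discriminant curve), and your Picard-rank and Hochschild--Serre steps match.

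The genuine gap is at the finite places $v\neq 3$ where $Y$ has bad reduction. Your proposed mechanism --- that sextic del Pezzo fibers ``always have points over $\Q_p$ for $p\neq 3$, at least generically,'' plus a continuity/constancy argument --- does not work: del Pezzo surfaces of degree $6$ over $\Q_p$ can fail to have rational points for every $p$ (blow up a nontrivial Severi--Brauer surface $\mathrm{SB}(\beta)$, $\beta\in\Br(\Q_p)[3]$, in a general degree-3 closed point), and local constancy of the evaluation map on the $p$-adic manifold $Y(\Q_v)$ gives neither global constancy nor vanishing. Note also that local solubility of $X$ at $v$ only yields points on the total space $X'$, not on the particular fibers above the image of $Y(\Q_v)$, so it does not force $\inv_v\alpha(P)=0$ either. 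The paper closes this gap with three ingredients you omit: (i) at primes of good reduction, Kuznetsov's du Val families over $\calO_v$ let $\alpha$ spread out to $\Br(\calY)$, so evaluations land in $\Br(\calO_v)=0$; (ii) at the bad primes $v\neq 3$ one verifies that the reduction has at most $7$ ordinary double points, so the good-reduction result of Colliot-Th\'el\`ene--Skorobogatov (via \cite{HVA13}) makes $\ev_\alpha$ constant on $Y(\Q_v)$; and (iii) the covering involution $\iota$ of $Y\to\PP^2$ satisfies $\iota^*\alpha=\alpha^{-1}$, so the constant value $c$ obeys $c=-c$ and, $\alpha$ having odd order, must vanish. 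Without (i)--(iii) (or some substitute, e.g.\ exhibiting an explicit $\Q_v$-point on a smooth fiber over the image of each relevant component of $Y(\Q_v)$, which you would still have to combine with the constancy statement and its singular-locus hypothesis), the claim $\inv_v\alpha(P_v)=0$ for $v\neq 3$ is unproved and the global sum argument collapses. A smaller inaccuracy: the del Pezzo fibration lives over $\PP^2$, and its base change to $Y$ is a double cover of $\Bl_T(X)$, not a variety birational to $X$; what is birational to the fiber $\calH_P$ is the degree-9 blow-down of the fiber over the image point $Q$, and one must also pin down the correct quadratic twist of the double cover defining $Y$.
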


In contrast to the manifold constructions of explicit transcendental Brauer classes on K3 surfaces developed over the last 15 years~\cites{Wittenberg,Ieronymou,HVAV11,EJ13,Preu,HVA13, IS15, MSTVA}, we do not write down an explicit central simple algebra representative for $\alpha$ over the function field $\kk(Y)$.  It would be worthwhile to do so; this would likely require a conceptual approach different from the one we take in this paper.

\subsection*{Cubic fourfolds of discriminant 18} 
The pair $\left(Y, \langle\alpha\rangle\right)$ is associated to a smooth cubic fourfold $X \subset \PP^5$ of discriminant 18. Work of Huybrechts~\cite{Huybrechts} and McKinnie et.\ al.~\cite{MSTVA} shows that every subgroup $\langle\alpha\rangle$ of order 3 in the Brauer group of a polarized complex K3 surface $Y$ of degree 2 with $\Pic(Y) \isom \Z \ell$ must arise from one of three types of auxiliary varieties, one of which is a special cubic fourfold of discriminant 18 (see Theorem~\ref{thm:MSTVA}).  This lattice-theoretic computation is realized geometrically in~\cite{AHTVA} and~\cite{Kuz17}, where it is shown that a general cubic fourfold $X$ of discriminant $18$ contains a sextic elliptic surface $T$, and projecting away from this surface gives a fibration $\pi\colon X' := \Bl_T(X) \to \PP^2$ into del Pezzo surfaces of degree $6$. Crucially, the structure morphism $\Hilb_{3n+1}(X'/\PP^2) \to \PP^2$ of the Hilbert scheme parametrizing twisted cubics in the fibers of $\pi$ has a Stein factorization
\[
\Hilb_{3n+1}(X'/\PP^2) \to Y \to \PP^2,
\]
consisting of an \'etale $\PP^2$-bundle followed by a finite degree $2$ map branched over a smooth sextic curve.  Hence $Y$ is a K3 surface, and $\alpha \in \Br(Y)[3]$ is the class corresponding to the Severi-Brauer surface bundle $\Hilb_{3n+1}(X'/\PP^2) \to Y$. The covering involution $\iota\colon Y \to Y$ induces a map $\iota^*\Br(Y) \to \Br(Y)$ such that $\iota^*(\alpha) = \alpha^{-1}$. In this sense, the scheme $\Hilb_{3n+1}(X'/\PP^2)$ is seen to recover the subgroup $\langle\alpha\rangle \subset \Br(Y)[3]$.

\subsection*{Highlights} 

In proving Theorem ~\ref{thm: MainThm}, we use several ideas developed in~\cites{HVAV11,HVA13,MSTVA}.  Nevertheless, odd order transcendental obstructions to the Hasse principle mediated by cubic fourfolds of discriminant 18 present significant theoretical and computational challenges not previously encountered:

\begin{enumerate}[leftmargin=*]
\item We are not aware of a computationally feasible method to write down equations for the scheme $\Hilb_{3n+1}(X'/\PP^2)$ that are suitable for our arithmetic application. Our approach instead uses ideas of Corn~\cite{Corn}: the generic fiber $S$ of the morphism $\pi$ is a del Pezzo surface of degree $6$ over the function field $\kk(\PP^2)$.  Its base extension $S_{\kk(Y)}$ contains two triples of pairwise-skew $(-1)$-curves, each defined over $\kk(Y)$.  Contracting each triple gives a del Pezzo surface of degree $9$ over $\kk(Y)$, i.e., we get two Severi-Brauer surfaces over $\kk(Y)$. Corn shows that the corresponding $3$-torsion Brauer classes in $\Br \kk(Y)$ are inverses of each other.  We use work of Addington et.\ al., Koll\'ar, and Kuznetsov~\cites{AHTVA,Kollar,Kuz17} to show that the subgroup of $\Br \kk(Y)$ thus obtained coincides with the image of the subgroup $\langle\alpha\rangle$ under the inclusion $\Br(Y)\hookrightarrow \Br \kk(Y)$. Hence, the classes obtained via Corn's construction are unramified over $Y$.  This description of the $3$-torsion Brauer classes is more amenable to explicit computations.

\medskip

\item A classical theorem of Wedderburn shows that the nontrivial Brauer classes in $\langle\alpha\rangle$ can be represented over $\kk(Y)$ by cyclic algebras; quite generally, any central simple algebra of degree 3 over a field is cyclic~\cite{KMRT}*{\S 19}.  We do not give such a representation here; even producing a suitable degree 3 cyclic extension of $\kk(Y)$ is nontrivial.  Instead we use a repeated application of the Lang-Nishimura lemma to show that local invariants at a finite place $v$ are non-trivial for every $P_v \in Y(k_v)$ as long as the cubic fourfold $X$ has no $k_v$-points. In fact, the most delicate step in our construction is writing down a cubic fourfold $X$ \emph{of discriminant 18} such that $X(\Q_3)$ is empty. We accomplish this by a taking a cleverly chosen linear combination of cubics each depending only on $3$ variables, that are $(\Z/3\Z)$-insoluble when considered as plane cubics.  Note that a cubic in at least 4 variables always has $(\Z/3\Z)$-points by the Chevalley-Warning theorem, and $\approx 99.259 \%$ of plane cubic curves over $\Q_3$ have $\Q_3$-rational points (\cite{BCF16}*{Theorem 1}).  These considerations compelled us to cast a wide net to find the fourfold $X$.

\medskip

\item Finding the primes of bad reduction of the K3 surface $Y$ is also computationally challenging; it requires the factorization of a 366 digit integer! We use an idea in~\cite{HVA13} to do this: we compute in turn an integer whose prime factors encode the primes of bad reduction for the cubic fourfold $X$, with the expectation that these numbers share a large common factor (which can be quickly computed using the Euclidean algorithm).  Notably, to compute this additional integer, we use the grevlex monomial ordering for Gr\"obner basis computations, despite the fact that grevlex is not an order suitable for general elimination theory (see \S\ref{sec:ComputationalTricks} to see why this idea works).

\medskip

\item We use work of Kuznetsov~\cite{Kuz17} to show $\alpha$ can only ramify at places where $Y$ has bad reduction, whence these are the only places where the local invariants of $\alpha$ can be nontrivial. 

\medskip

\item We impose the condition that the K3 surface $Y$ has good reduction at $p = 2$ to ensure that $\alpha$ does not ramify there. This requires producing, when possible, a model for $Y$ which reduces to an equation of the form $w^2 + w g_1 + g_2 = 0$ over $\F_{2}$. The Jacobian criterion can then be used to check for smoothness.

\medskip

\item To recover the discriminant locus $f(x,y,z) = 0$ of the del Pezzo fibration $\pi$, we were impelled to use an interpolation approach as the natural Gr\"obner basis calculation is computationally infeasible. We sample integral coprime pairs $(y_0, z_0)$ of small height, each of which gives a degree 12 univariate polynomial which is the specialization of $f(x,y_0,z_0)$ of $f(x,y,z)$. By viewing $F$ as a polynomial with indeterminate coefficients, each specialization gives linear relations among the coefficients. With a sufficient number of specializations, one can reconstruct $F$ using Gaussian elimination.
\end{enumerate}

\medskip

\noindent In addition to the contributions above, we mention two standard difficulties that arise in this type of construction.

\medskip

\begin{enumerate}[leftmargin=*]\setcounter{enumi}{6}
\item There is no a priori guarantee that the K3 surface afforded by the construction above
has geometric Picard rank one. We use work of
van Luijk~\cite{vL07}, and Elsenhans and Jahnel~\cites{EJ08,EJ11} on K3 surfaces of degree 2 to produce such a K3 surface. 

\medskip

\item To compute the local invariants of a class $\alpha$ at finite places of bad reduction $p \neq 3$, we use~\cite{HVA13}, which shows that if the singular locus of the reduction of $X$ at $p$ consists of at most $7$ ordinary double points, then the local invariant map $X(\Q_p) \to \frac{1}{p}\Z/\Z$ has constant image. It thus suffices to compute its value at one $\Q_p$-point of $X$. 
\end{enumerate}	

\section*{Acknowledgments}

We thank Nicolas Addington, Asher Auel, Brendan Hassett, Michael Stoll, and Olivier Wittenberg for illuminating conversations and comments.  A.\ V.-A.\ was partially supported by NSF grant DMS-1352291. All computations presented here were done with {\tt Magma} \cite{MAGMA}.

\section{Intimations from Hodge Theory}\label{sec:StartingData}

The purpose of this section is to briefly indicate, along the lines of~\cites{vG05,HVAV11,HVA13,MSTVA}, why one would consider cubic fourfolds of discriminant 18 as a source for transcendental $3$-torsion Brauer classes on general degree $2$ K3 surfaces. We refer the reader to~\cite{MSTVA} for a full account of these ideas. All varieties in this section are complex.

\subsection{Lattices and Hodge Theory}

Let $Y$ be a complex projective K3 surface of degree $2d$, i.e., a smooth projective integral surface over $\C$ such that $\HH^1(Y,\scrO_Y) = 0$ and $\omega_Y \isom \scrO_Y$, together with an ample line bundle with self-intersection $2d$. The singular cohomology group $\HH^2(Y,\Z)$ is a lattice with respect to the intersection form, and by \cite{LP81}, we can write
\[
	\HH^2(Y,\Z) \isom U^3 \oplus E_8(-1)^2 =: \LambdaK, 
\]
where $U$ is the hyperbolic plane (i.e. an even unimodular lattice whose base extension to $\R$ has signature (1,1)), and $E_8(-1)$ is the unique negative definite even unimodular lattice of rank eight. In particular, $\LambdaK$ is even, unimodular, and has signature (3,19).

Write $\NS(Y)$ for the Neron-Severi group of $Y$; we call $T_Y := \NS(Y)^\perp \subset H^2(Y,\Z)$ the \defi{transcendental lattice} of $Y$. By \cite{vG05}*{\S2}, an element of order $n$ in $\Br(Y)$ gives rise to a surjective homomorphism $\alpha\colon T_Y \to \Z/n\Z$, whose kernel is a sublattice of index $n$. Conversely, a sublattice $\Gamma \subset T_Y$ of index $n$ determines a subgroup $\langle \alpha \rangle \subset \Br(Y)$ of order $n$.

Generalizing results of van Geemen~\cite{vG05}, McKinnie et.\ al. classify sublattices of $T_Y$, up to isomorphism, when $\NS(Y) \isom \Z \ell$ with $\ell^2 = 2d > 0$, and $n = p$ is a rational prime~\cite{MSTVA}. We summarize their findings in the case where $d = 1$ and $p = 3$.

\begin{theorem}
\label{thm:MSTVA}
Let $Y$ be a complex projective K3 surface with $\NS(Y) \cong \Z \ell$, $\ell^2 = 2$, and let $\langle \alpha \rangle \subset \Br(Y)$ be a subgroup of order $3$. Then either

\medskip

\begin{enumerate}

\item there is a unique primitive embedding $\Gamma_{\langle \alpha \rangle} \hookrightarrow \Lambda_{K3}$, in which case  surjectivity of the period map gives rise to a degree $18$ K3 surface $X$ associated to the pair $(Y,\langle\alpha\rangle)$; or

\medskip

\item $\Gamma_{\langle \alpha \rangle}(-1) \cong \langle h^2, T\rangle^\perp \subseteq {\rm H}^4(X,\Z)$, where $X \subset \PP^5$ is a cubic fourfold of discriminant 18, $h$ is the hyperplane class, and $T$ is a surface in $X$ not homologous to $h^2$; or,

\medskip

\item $\Gamma_{\langle \alpha \rangle}$ is a lattice with discriminant group $\Z/2\Z \times (\Z/3\Z)^2$.
\end{enumerate} 
\end{theorem}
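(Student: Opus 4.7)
The plan is to reduce the classification of order-3 subgroups of $\Br(Y)$ to a classification of index-3 sublattices of the transcendental lattice $T_Y$ up to isometry, and then use Nikulin's theory of discriminant forms together with surjectivity of the period map (for K3 surfaces in case (1), for special cubic fourfolds in case (2)) to realize each lattice geometrically or to show it has no such realization.

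First I would set up the lattice data. Since $\NS(Y) = \Z\ell$ is a rank-one primitive sublattice of the unimodular K3 lattice $\LambdaK$ with $\ell^2 = 2$, the orthogonal complement $T_Y$ is an even lattice of signature $(2,19)$, and Nikulin's general formalism gives $q_{T_Y} \cong -q_{\NS(Y)}$, so $d(T_Y) \cong \Z/2\Z$ with generator of square $-\tfrac12 \bmod 2\Z$. As noted in the excerpt (after~\cite{vG05}), a subgroup $\langle \alpha \rangle \subset \Br(Y)$ of order $3$ corresponds to the kernel $\Gamma = \Gamma_{\langle \alpha \rangle} := \ker\alpha$ of a surjection $T_Y \twoheadrightarrow \Z/3\Z$, well-defined up to $\pm 1$; this $\Gamma$ has index $3$ in $T_Y$, discriminant $18$, and the same signature $(2,19)$.

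The next step is to classify the possible isometry classes of $\Gamma$ by analyzing $q_\Gamma$. Since $3 \nmid |d(T_Y)|$, the $2$-part of $q_\Gamma$ agrees with that of $q_{T_Y}$, and the classification reduces to the $3$-primary part. Standard lattice gymnastics (passing through $\Gamma^\vee / T_Y \subset T_Y^\vee/T_Y \oplus \Z/3\Z$ and computing the induced form) shows that the $3$-part of $d(\Gamma)$ is either $\Z/9\Z$ with a nondegenerate quadratic form, or $(\Z/3\Z)^2$ with one of the two forms classified by Wall. A short enumeration leaves exactly three isometry classes of pairs $(\Gamma, q_\Gamma)$, which will correspond to the three cases in the theorem.

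Finally I would realize each class geometrically via Nikulin's existence and uniqueness criteria for primitive embeddings combined with the relevant period surjectivity. In case (1), when the $3$-primary part of $d(\Gamma)$ is $\Z/9\Z$, Nikulin's criteria yield a unique primitive embedding $\Gamma \hookrightarrow \LambdaK$ whose orthogonal complement is a rank-$2$ hyperbolic lattice of discriminant $18$; surjectivity of the period map then produces a polarized K3 surface $X$ of degree $18$ attached to the pair $(Y,\langle\alpha\rangle)$. In case (2), I would match $q_\Gamma(-1)$ with the discriminant form of $\langle h^2, T\rangle^\perp$ inside the primitive $\HH^4$-lattice of a smooth cubic fourfold of discriminant $18$ as described by Hassett~\cite{Has99}; surjectivity of the period map for cubic fourfolds then realizes the pair as a special cubic fourfold. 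Case (3) is the residual lattice with discriminant group $\Z/2\Z \times (\Z/3\Z)^2$, for which the embedding constraints are incompatible with both of the previous geometric realizations.

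The main obstacle will be the careful $3$-adic computation of $q_\Gamma$ for each candidate surjection $\alpha$, and matching these forms against Hassett's lattice calculations for discriminant-$18$ cubic fourfolds; Nikulin's existence/uniqueness machinery then essentially automates the rest. Since this result is established in~\cite{MSTVA} as a special case of their general classification (with $d=1$, $n=p=3$), one can also appeal directly to their Theorem, but the outline above is the route I would follow to verify it in this case.
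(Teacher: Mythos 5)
The paper does not reprove this statement at all: its ``proof'' is a one-line citation to \cite{MSTVA}*{Theorem 9}, \cite{MSTVA}*{Proposition 10} and \cite{MSTVA}*{\S 2.6}, so your closing remark (appeal directly to their theorem with $d=1$, $p=3$) is exactly what the paper does. The bulk of your proposal is instead a sketch of the underlying argument, and it follows essentially the route taken in \cite{MSTVA} itself (in the tradition of van Geemen \cite{vG05}): identify $\Gamma_{\langle\alpha\rangle}=\ker(\alpha\colon T_Y\twoheadrightarrow \Z/3\Z)$, note it has index $3$, discriminant $18$ and signature $(2,19)$, classify the possible discriminant forms (the $2$-part untouched, the $3$-part either $\Z/9\Z$ or $(\Z/3\Z)^2$), and then match each class with a geometric realization via Nikulin's embedding criteria and surjectivity of the relevant period maps, using Hassett's lattice description \cite{Has99} for the discriminant-$18$ cubic case. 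What your route buys is an actual verification rather than a citation; what the paper's route buys is brevity, since the full case analysis is carried out in \cite{MSTVA}.

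One concrete slip to fix in case (1): $\Gamma_{\langle\alpha\rangle}$ has rank $21$, so a primitive embedding $\Gamma_{\langle\alpha\rangle}\hookrightarrow\LambdaK$ has orthogonal complement of rank $1$ and signature $(1,0)$, namely a positive generator of square $18$ playing the role of the degree-$18$ polarization; it is not a ``rank-$2$ hyperbolic lattice of discriminant $18$.'' With that correction the realization of case (1) is the intended one ($\Gamma_{\langle\alpha\rangle}\cong T_X$ for a degree-$18$ K3 surface $X$). Also, in the enumeration step you should be explicit that the signature-mod-$8$ constraint on discriminant forms and the compatibility with the fixed $2$-part are what cut the a priori possibilities down to exactly the three classes listed; this is where the count in \cite{MSTVA} actually comes from, and without it the phrase ``a short enumeration'' hides the only nontrivial computation.
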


\begin{proof}
	This is a special case of~\cite{MSTVA}*{Theorem~9}, together with~\cite{MSTVA}*{Proposition~10} and the material in~\cite{MSTVA}*{\S2.6}.
\end{proof}

Theorem~\ref{thm:MSTVA} suggests that a cubic fourfold $X$ of discriminant $18$ should encode a pair $\left(Y,\langle\alpha\rangle\right)$, where $Y$ is K3 surface of degree $2$, together with a subgroup $\langle\alpha\rangle\subset \Br(Y)[3]$.  In  the next few sections, we explain a construction that gives a geometric manifestation of this lattice-theoretic calculation.

\section{Special Cubic fourfolds of discriminant 18}\label{sec:Cubics18}

Recall that a \defi{special cubic fourfold} $X \subset \PP^5$ is a smooth cubic hypersurface that contains a surface $T$  not homologous to a complete intersection~\cite{Has99}. Write $h$ for the restriction of the hyperplane class of $\PP^5$ to $X$, and assume that the lattice $K := \langle h^2, T \rangle \subset H^4(X,\Z)$ is saturated. The \defi{discriminant} of $(X,K)$ is the determinant of the Gram matrix of $K$. Special cubic fourfolds of discriminant $D \equiv 0, 2 \bmod 6$ form an irreducible divisor $\calC_D$ in the coarse moduli space of cubic fourfolds.  In particular, the coarse moduli space $\calC_{18}$ of special cubic fourfolds of discriminant $18$ is a $19$-dimensional quasi-projective variety.

We recall results from \S2 of~\cite{AHTVA}, which show that a general element of $\calC_{18}$ is birational to a fibration into sextic del Pezzo surfaces over $\PP^2$.

\begin{theorem}{\cite{AHTVA}} \label{thm: AHTVA}
	Let $X\subset \PP^5$ be a general cubic in $\calC_{18}$.
	\medskip

 	\begin{enumerate}
		\item The fourfold $X$ contains an elliptic ruled surface $T$ of degree 6 with the property that the linear system of quadrics in $\PP^5$ containing $T$ is two-dimensional, and its base locus is a complete intersection $\Pi_1 \cup T \cup \Pi_2$, where $\Pi_1$ and $\Pi_2$ are disjoint planes.
		\medskip

		\item The generic fiber of the map $\pi \colon X' := \Bl_T(X) \to \PP^2$ induced by the linear system of quadrics containing $T$ is a del Pezzo surface of degree 6 over $\C(\PP^2)$. 
		\medskip

		\item  The morphism $\pi\colon X' \to \PP^2$ is a \defi{good del Pezzo fibration} (see \cite{AHTVA}*{Definition 11}) if and only if the discriminant locus is a curve of degree $12$ with two irreducible sextic components $B_I$ and $B_{II}$ that intersect transversely; moreover, $B_I$ is a smooth plane sextic and $B_{II}$ has 9 cusps.
	\end{enumerate}
\end{theorem}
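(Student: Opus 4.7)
Since this theorem is attributed to \cite{AHTVA}, my plan is to outline how each conclusion would follow from the Hodge-theoretic input of Theorem~\ref{thm:MSTVA}(2). By definition, a general $X \in \calC_{18}$ carries a saturated rank-$2$ sublattice $K \subset \HH^{2,2}(X,\Z)$ with Gram matrix $\left(\begin{smallmatrix} 3 & 6 \\ 6 & 18\end{smallmatrix}\right)$ in the basis $(h^2, T)$, so $h^2 \cdot T = 6$ and $T^2 = 18$. First I would show that the class $T$ is represented by a smooth irreducible surface. By irreducibility of $\calC_{18}$ and semicontinuity, it suffices to exhibit one example --- for instance by taking a degree-$6$ elliptic normal scroll in $\PP^6$, projecting it into $\PP^5$, and constructing a smooth cubic fourfold containing its image. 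The double point formula applied to $T \subset X$, combined with adjunction, yields $K_T^2$ and the Hodge numbers of $T$, identifying $T$ as an elliptic ruled surface of degree $6$.

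For the statement about quadrics, the plan is to use the sequence $0 \to \calI_T(2) \to \calO_{\PP^5}(2) \to \calO_T(2) \to 0$ and to compute $h^0(T, \calO_T(2))$ via Riemann--Roch on the ruled surface $T$ with its hyperplane class of self-intersection $6$; this yields $h^0(\calI_T(2)) = 3$, so $|\calI_T(2)| \cong \PP^2$. Its base locus has degree at most $8$ by B\'ezout (three general quadrics meet in $8$ points) and contains $T$ with degree $6$; a liaison (Peskine--Szpiro) computation then shows the residual scheme has degree $2$ and, for generic $X$, is a disjoint union of two planes $\Pi_1 \sqcup \Pi_2$, completing (1).

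For (2), the generic fiber of $\pi$ is the strict transform in $X'$ of the residual of $T$ in a general complete intersection $X \cap Q_s \cap Q_t$, which has total degree $12$ in $\PP^5$. Subtracting $T$ leaves a degree-$6$ surface, and applying adjunction inside the smooth Fano threefold $V := Q_s \cap Q_t$ (where $K_V = -2H|_V$) combined with the linkage formula shows that the residual has ample anticanonical bundle with self-intersection $6$, i.e., it is a sextic del Pezzo.

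For (3), the geometry of sextic del Pezzo surfaces drives the story: a smooth one carries two natural triples of pairwise-skew $(-1)$-curves, interchanged by the Bertini involution, and the symmetry group $\mathfrak{S}_3 \times \Z/2\Z$ acts on the configuration. Each triple can degenerate along a codimension-one locus in $\PP^2$ independently, producing two discriminant components $B_I$ and $B_{II}$; a Chern class calculation for the relative cotangent sheaf of $\pi$ gives total degree $12$, with each component of degree $6$. Smoothness of $B_I$ and the nine cusps of $B_{II}$ would emerge from a local normal-form analysis of the two degeneration types. I expect this last step to be the main obstacle: controlling the precise singularity structure of each discriminant component, as opposed to just its degree, requires ruling out accidental coincidences in how the two families of $(-1)$-curves collapse, and a routine deformation-theoretic argument does not suffice --- this is where the geometric content of \cite{AHTVA} concentrates.
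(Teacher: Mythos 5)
This statement is not proved in the paper at all: it is quoted verbatim from \S2 of \cite{AHTVA}, so there is no internal argument to compare yours against, and the only question is whether your sketch could stand on its own. Parts (1) and (2) of your outline are broadly in the spirit of the source: one does exhibit sextic elliptic ruled surfaces (projected elliptic normal scrolls) inside smooth cubics, computes $h^0(\calI_T(2))=3$, identifies the base locus of the net, and recognizes the general fiber of $\pi$ as the surface linked to $T$ in the complete intersection $X\cap Q_s\cap Q_t$ inside the degree-$4$ threefold $Q_s\cap Q_t$. But two points need repair even there: ``irreducibility of $\calC_{18}$ and semicontinuity'' does not by itself show that the \emph{general} member of $\calC_{18}$ contains such a $T$ --- the honest argument is a dimension count on the incidence correspondence $\{(T,X)\}$ showing its image is an irreducible $19$-dimensional constructible subset of the $19$-dimensional divisor $\calC_{18}$, hence dense; and three general quadrics in $\PP^5$ cut out a surface of degree $8$, not ``$8$ points'' (harmless, but the residual-scheme analysis must be done for surfaces, via liaison, as you indicate).

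The genuine gap is part (3), which you yourself flag as unresolved, and the mechanism you propose for it would fail. The two discriminant components do \emph{not} arise from the two triples of pairwise-skew $(-1)$-curves degenerating along separate divisors in $\PP^2$: those two triples are exchanged by the monodromy around $B_I$ --- that is precisely why the Stein factorization of $\Hilb_{3n+1}(X'/\PP^2)\to\PP^2$ produces a \emph{double} cover branched along $B_I$ --- so neither triple individually cuts out a divisor on $\PP^2$. In \cite{AHTVA} the decomposition $B_I\cup B_{II}$ is governed by the du Val type of the singular fibers (the two components parametrize different degenerations of the sextic del Pezzo), and the finer structure is what carries the content: $B_I$ smooth because the associated degree-$2$ cover (the K3 surface $Y$) must be smooth for a good fibration, and $B_{II}$ a $9$-cuspidal sextic because it is identified with the projectively dual curve of a plane cubic model of the elliptic curve over which $T$ is ruled (equivalently, it is the branch curve of the associated degree-$3$ cover in Kuznetsov's sense). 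A Chern-class computation of the relative discriminant only gives total degree $12$; it does not split it into two sextics, nor does a local normal-form analysis alone yield the global statements about smoothness of $B_I$, the nine cusps of $B_{II}$, and transversality of $B_I\cap B_{II}$. So as a reconstruction of the cited theorem, your proposal is incomplete exactly where the geometric input of \cite{AHTVA} is concentrated.
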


As the curve $B_I$ is a smooth sextic in $\PP^{2}$, the double cover of $\PP^2$ branched along this sextic is a smooth K3 surface $Y$ of degree 2.  When working over nonclosed base fields, this double cover can have quadratic twists.  In~\S\ref{sec:HPCounterexample} we explain how to use Galois-theoretic data of the configuration of $(-1)$-curves on the fibers of $\pi$ to chose the correct quadratic twist $Y$. 

\section{Du Val families of sextic del Pezzo surfaces}\label{sec:duVal}

Theorem~\ref{thm: AHTVA} shows that a general element $X$ of $\calC_{18}$ yields a K3 surface $Y$ of degree $2$ by way of a family of sextic del Pezzo surfaces.  In this section, we exploit recent work of Kuznetsov~\cite{Kuz17}, which generalizes the notion of a good del Pezzo fibration to that of a du Val family, as well as some of the results and ideas in~\cite{AHTVA}, to show how a subgroup of order $3$ in $\Br(Y)$ arises from such a family.  The results of this section are used in \S\ref{ss:goodred} to show that over a number field, the elements of this subgroup are unramified at places of good reduction for $Y$.

Throughout this section, $k$ denotes a field.

\begin{defn}
A \defi{sextic du Val del Pezzo surface} is a normal integral projective surface over $k$ with at worst du Val singularities, with ample anticanonical class $-K_X$ such that $K_X^2 = 6$.
\end{defn}

\begin{defn}
A \defi{du Val family of sextic del Pezzos} is a flat projective morphism $f\colon\scrX \to S$ of $k$-schemes such that, for each geometric point $s \in S$, the fiber $\scrX_s$ is a sextic du Val del Pezzo surface.
\end{defn}

\begin{theorem} \label{thm: Kuznetsov}
Let $\calO$ be a discrete valuation ring with fraction field $k$ and residue field $\F$. Let $\pi\colon \scrX' \to \PP^2_{\calO}$ be a flat projective morphism such that $\pi_k\colon \scrX'_k \to \PP^2_k$ and $\pi_\F\colon \scrX'_\F \to \PP^2_\F$ are du Val families of sextic del Pezzos. Let $\calH = \Hilb_{3n+1}(\scrX'/\PP^2_{\calO})$ be the relative Hilbert scheme parametrizing twisted cubics in the fibers of $\pi$. Then the Stein factorization
\[
\calH \to \calY \to \PP^2_{\calO}
\]
consists of an \'etale-locally trivial $\PP^2$-bundle over $\calY$ followed by a degree $2$ cover of $\PP^2_{\calO}$. 
\end{theorem}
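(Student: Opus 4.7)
The approach is to combine the Stein factorization machinery with Kuznetsov's geometric classification of $\Hilb_{3n+1}$ of a single du Val sextic del Pezzo surface, propagating the fiberwise picture through the family via cohomology and base change.

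First I would establish that $\calH \to \PP^2_\calO$ is flat and projective. Projectivity is standard for a relative Hilbert scheme of a projective morphism with fixed Hilbert polynomial, and flatness of $\calH \to \PP^2_\calO$ follows from flatness of $\pi$ together with the constancy of $3n+1$ across fibers.

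The heart of the matter is the fiberwise calculation. For each geometric point $s \to \PP^2_\calO$, the fiber $\scrX'_s$ is a sextic du Val del Pezzo surface, and Kuznetsov~\cite{Kuz17} shows that
\[
\Hilb_{3n+1}(\scrX'_s) \;\cong\; \PP^2 \sqcup \PP^2,
\]
uniformly across the smooth and du Val singular cases. The two components correspond to the two pencils of twisted cubics obtained by pulling back lines under the two natural blowdowns of (a minimal resolution of) $\scrX'_s$ onto $\PP^2$. I expect this to be the main technical obstacle; it is precisely the content of Kuznetsov's paper, and I would import the statement wholesale rather than reproving it.

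Once this fiberwise picture is in hand, the rest is formal. The fiberwise computation $H^0(\Hilb_{3n+1}(\scrX'_s),\calO) \cong k(s)\oplus k(s)$ and vanishing of higher cohomology on $\PP^2\sqcup\PP^2$ allow cohomology and base change to conclude that $\pi_*\calO_\calH$ is locally free of rank $2$ on $\PP^2_\calO$, with formation commuting with arbitrary base change. Setting $\calY := \SPEC(\pi_*\calO_\calH)$ therefore realizes the Stein factorization and exhibits $\calY \to \PP^2_\calO$ as a finite flat morphism of degree $2$ (the branching, if any, is encoded by the places where $\pi_*\calO_\calH$ is not étale over $\calO_{\PP^2_\calO}$, consistent with the theorem's use of the phrase \emph{degree $2$ cover}). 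The induced map $\calH \to \calY$ is a proper flat morphism whose geometric fibers are all isomorphic to $\PP^2$; a standard descent argument via the theory of Severi-Brauer schemes of relative dimension $2$ then upgrades it to an étale-locally trivial $\PP^2$-bundle, classified by a class in $H^2_{\et}(\calY,\G_m)[3]$. This last class is precisely the Brauer class $\alpha$ from the introduction.
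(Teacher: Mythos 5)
There is a genuine gap, and it lies exactly where you flag the ``main technical obstacle'': the fiberwise statement you propose to import is not what Kuznetsov proves, and it is false for the singular fibers that matter. For a \emph{smooth} sextic del Pezzo over an algebraically closed field the two blowdowns to $\PP^2$ do give $\Hilb_{3n+1}(\scrX'_s)\cong\PP^2\sqcup\PP^2$, but over the discriminant the two contractions degenerate (e.g.\ for an $A_1$ fiber coming from three collinear blown-up points, the class $2L-E_1-E_2-E_3$ acquires a fixed component and its honest twisted cubics disappear), and the length-two scheme governing the Hilbert scheme becomes non-reduced; since Hilbert schemes commute with base change, $\Hilb_{3n+1}(\scrX'_s)$ is then a $\PP^2$-bundle over a non-reduced length-$2$ scheme, not two disjoint planes. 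Your uniform claim is in fact incompatible with the theorem you are proving: if every geometric fiber of $\calH\to\PP^2_{\calO}$ were $\PP^2\sqcup\PP^2$, the rank-$2$ algebra $\pi_*\calO_{\calH}$ would have geometrically reduced fibers, so $\calY\to\PP^2_{\calO}$ would be finite \'etale, hence split over the simply connected $\PP^2_k$, forcing $\calY_k\cong\PP^2_k\sqcup\PP^2_k$ --- whereas in the intended application $\calY_k$ is a K3 double cover of $\PP^2_k$ branched along the smooth sextic $B_I$. So the cohomology-and-base-change scaffolding is built on a false input; moreover, your final upgrade of $\calH\to\calY$ to an \'etale-locally trivial $\PP^2$-bundle quietly assumes flatness of $\calH\to\calY$, which does not come for free from defining $\calY$ as $\SPEC(\pi_*\calO_{\calH})$.

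The paper's proof avoids the naive fiberwise Hilbert-scheme description precisely for this reason: it invokes Kuznetsov's sheaf-theoretic package, in which the relative moduli stack $\scrM(\scrX/S)$ of semistable sheaves with Hilbert polynomial $(3n+1)(n+1)$ is a $\G_m$-gerbe over its coarse space $\calY$ with a $3$-torsion Brauer class, $\calY\to S$ is finite \emph{flat} (not \'etale) of degree $2$, and the natural morphism $\calH\to\scrM(\scrX/S)\to\calY$ is an \'etale-locally trivial $\PP^2$-bundle with $\calH$ flat over $S$~\cite{Kuz17}*{Theorem 5.2, Propositions 5.3 and 5.14}; the Stein factorization statement is then immediate, applied to $\pi_k$ and $\pi_{\F}$. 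If you replace your fiberwise claim by the correct one (that each $\Hilb_{3n+1}(\scrX'_s)$ is a $\PP^2$-bundle over a length-$2$ scheme, possibly non-reduced), your base-change argument does still yield that $\pi_*\calO_{\calH}$ is locally free of rank $2$, i.e.\ the ``finite flat degree $2$'' half of the conclusion; but both that corrected input and the $\PP^2$-bundle structure over $\calY$ are exactly the content of Kuznetsov's moduli description, so there is no shortcut around it.
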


\begin{proof}
Let $f\colon\scrX \to S$ be a du Val family of sextic del Pezzos, and let $\scrM(\scrX/S)$ be the relative stack of semistable sheaves on the fibers of $f$ with Hilbert polynomial $(3n + 1)(n+1)$. Kuznetsov shows in ~\cite{Kuz17}*{Proposition 5.3} that $\scrM(\scrX/S)$ is a $\G_m$-gerbe on the course moduli space $\calY := (\scrM(\scrX/S))_{\coarse}$ with an obstruction given by a class $\beta \in \Br(\calY)$ of order $3$.  Moreover, letting $\calH = \Hilb_{3n+1}(\scrX/S)$ be the relative Hilbert scheme parametrizing twisted cubics in the fibers of $f$, Kuznetsov shows in~\cite{Kuz17}*{Proposition 5.14} that $\calH$ is flat over $S$, and that there is a morphism of functors $\calH \to \scrM(\scrX/S)$ such that the composition $\calH \to \scrM(\scrX/S) \to \calY$ is an \'etale-locally trivial $\PP^2$-bundle.  The bundle $\calH \to \calY$ of Severi-Brauer varieties is associated with the Brauer class $\beta$~\cite{Kuz17}*{Proposition 5.14}. Finally,~\cite{Kuz17}*{Theorem 5.2} shows that $\calY \to S$ is a finite flat morphism of degree $2$.  Taken together, these facts imply that $\calH \to \calY \to S$ is the Stein factorization of the structure morphism $\calH \to S$.  The Theorem follows from an application of the above ideas to the families $\pi_F$ and $\pi_k$, taking $S$ to be $\PP^2_\F$ and $\PP^2_k$, respectively.  
\end{proof}

Being an \'etale $\PP^2$-bundle, the morphism $\calH_k \to \calY_k$ over the generic point of $\Spec \calO$ in Theorem~\ref{thm: Kuznetsov} gives rise to a class $\alpha \in \Br(\calY_k)[3]$.  The covering involution $\iota\colon \calY_k\to \calY_k$ induces a map on Brauer groups such that $\iota^*(\alpha) = \alpha^{-1}$~\cite{Kuz17}*{Proposition~5.3}. Thus, the scheme $\calH_k$ recovers the subgroup $\langle\alpha\rangle \subset \Br(Y)[3]$.

\section{Brauer classes associated to sextic del Pezzo Surfaces}\label{sec:Geometry} 

The proof of Theorem ~\ref{thm: Kuznetsov} demonstrates that the construction of the relative Hilbert scheme $\calH \to \PP^2$ parametrizing twisted cubics in the fibers of $\pi \colon X' \to \PP^2$ is valid over a field, and thus yields a Brauer class on our K3 surface $Y$. However, without knowledge of equations for $\calH$, this representation of the Brauer class is impractical for computational purposes. In lieu of this, we use work of Corn~\cite{Corn} to provide an alternative construction of a Brauer class  which arises from the generic fiber of the del Pezzo fibration $\pi$. A priori, this class is contained only in $\Br \kk(Y)[3]$. To prove that it is in fact unramified on $Y$ and moreover equal to the class corresponding to $\calH$, we employ work of Koll\'ar \cite{Kollar} on the geometry of Severi-Brauer varieties; this is the content of Proposition~\ref{prop: EquivAlgebras}. In order to describe these constructions, we must first recall some properties of del Pezzo surfaces of degree six over nonclosed fields.

\subsection{Sextic del Pezzo surfaces}
\label{ss:dP6s}

Let $S$ be a del Pezzo surface of degree six over a field $k$. Over a fixed algebraic closure $\kbar$ of $k$, the surface $\Sbar$ is isomorphic to the blowup of the projective plane $\PP^2_{\kbar}$ at 3 non-collinear points, $\{P_1, P_2, P_3\}$. The ($-1$)-curves on $\Sbar$ consist of the exceptional divisors and the proper transforms of lines joining pairs of the points. More precisely, letting $E_i$ denote the (class in $\Pic(\Sbar)$ of) the exceptional curve corresponding to $P_i$, and $L$ the class of the pullback of a line in $\PP^2_{\kbar}$ not passing through any of the $P_i$, the six exceptional curves in $\Sbar$ are
\[
E_1, E_2, E_3, L - E_1 - E_2, L - E_2 - E_3, L - E_1 - E_3.
\]
The group $\Pic \Sbar$ is isomorphic to $\Z^4$, with a basis given by  $\{ L, E_1, E_2, E_3 \}$, and intersection pairing is determined by the relations $L^2 =1$, $E_i^2 = -\delta_{ij}$, and $E_i.L = 0$.  Hence, the six exceptional curves intersect in a hexagonal configuration (Figure~1).

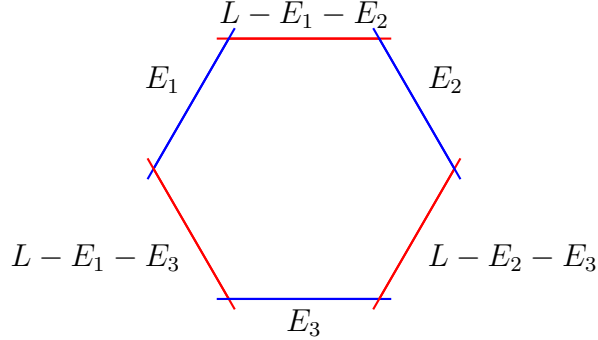
\begin{figure}[h] \label{fig: S2action}
\begin{tikzpicture}[scale = .8]
   \newdimen\Rasd;
	\Rasd=2.5cm;
   \draw (0:2.5cm);

           \foreach \x/\l in
   { 60/$E_2$,
   120/$L- E_1 - E_2 $,
   180/$E_1$,
   240/$L - E_1 - E_3$,
   300/$E_3 $,
   360/$L- E_2 - E_3 $
  }
  \draw[postaction={decorate}] ({\x-60}:\Rasd) -- node[auto,swap]{\l} (\x:\Rasd);
  \draw[red, thick] (60:2.5cm)++(.2,0) -- (120:2.5cm);
  \draw[red, thick] (120:2.5cm)++(180:.2) -- (60:2.5cm);
  \draw[blue, thick] (120:2.5cm)++(60:.2) -- (180:2.5cm);
  \draw[blue, thick] (180:2.5cm)++(240:.2) -- (120:2.5cm);
  \draw[red, thick] (180:2.5cm)++(120:.2) -- (240:2.5cm);
  \draw[red, thick] (240:2.5cm)++(300:.2) -- (180:2.5cm);
  \draw[blue, thick] (240:2.5cm)++(180:.2) -- (300:2.5cm);
  \draw[blue, thick] (300:2.5cm)++(360:.2) -- (240:2.5cm);
  \draw[red, thick] (300:2.5cm)++(240:.2) -- (0:2.5cm);
  \draw[red, thick] (360:2.5cm)++(60:.2) -- (300:2.5cm);
  \draw[blue, thick] (0:2.5cm)++(300:.2) -- (60:2.5cm);
  \draw[blue, thick] (60:2.5cm)++(120:.2) -- (0:2.5cm);
\end{tikzpicture}
\caption{The hexagon of exceptional curves on a del Pezzo surface of degree $6$}
\end{figure}

The action of the absolute Galois group on $\Pic \Sbar$ preserves intersections, and thus induces an automorphism on the hexagon of lines, which has dihedral automorphism group $\frakD_{12} = \frakS_2 \times \frakS_3$.  We obtain a homomorphism $\Gal(\kbar/k) \to\frakS_2 \times \frakS_3$. Projection onto either factor yields cocycles with values in $\frakS_2$ or $\frakS_3$, and thereby determines a pair $(K,M)$, where $K$ and $M$ are \'etale quadratic and cubic algebras over $k$, respectively. As observed in \cite{CTKM07}, the algebras $K$ and $M$ are naturally isomorphic to those associated with the Galois action on the lines of the del Pezzo surface $\Sbar$. That is, $K$ can be identified with the extension over which the lines split into two Galois stable skew triples
\begin{equation}
\label{eq:GaloisInvariantSets}
\{E_1,E_2,E_3\}\qquad\text{and}\qquad\{L-E_1-E_2,L - E_2 - E_3, L - E_1 - E_3\}.
\end{equation}
Thus, over $K$ it is possible to blow down these two Galois stable skew triples, and obtain a del Pezzo surface of degree $9$.

\begin{prop} \label{prop: CornDelPezzoData}
	Let $S$ be a del Pezzo surface of degree 6 over a field $k$. There exists a field $K$ such that $[K:k] =1$ or 2, and surfaces $X_1$ and $X_2$ defined over $K$ such that:
	\begin{enumerate}
		\item there exist morphisms $\pi_{X_i}: S_K \to X_i$ which exhibit $S_K$ as the blowup of $X_i$ at a $G_K$ stable set of three non-collinear points $\{P_{i1}, P_{i2}, P_{i3}\}$,
		\item the preimages of these points form a full set of exceptional curves in $\Sbar$,
		\item and $X_1$ and $X_2$ are Severi-Brauer varieties of dimension 2.
	\end{enumerate}
	Moreover, the Severi-Brauer varieties $X_1, X_2$ give rise to opposite algebras in $\Br K$.
\end{prop}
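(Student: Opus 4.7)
My strategy is to directly unpack the framework laid out in Section~\ref{ss:dP6s}. The Galois representation $\rho\colon G_k \to \frakS_2 \times \frakS_3$ is defined by the action of $G_k$ on $\Pic \Sbar$, which permutes the hexagon of $(-1)$-curves. The first step is to let $K/k$ be the étale quadratic algebra corresponding to the projection $\rho_1\colon G_k \to \frakS_2$, so that $G_K = \ker \rho_1$. By construction, $G_K$ stabilizes each of the two triples of pairwise-skew $(-1)$-curves displayed in~\eqref{eq:GaloisInvariantSets} as a set.

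Next, I would contract each triple over $K$. Since each triple is $G_K$-stable and consists of pairwise disjoint $(-1)$-curves, one can contract over $\Kbar$ via Castelnuovo and then descend by Galois descent (the effective divisor being contracted is defined over $K$). This yields smooth projective surfaces $X_i$ over $K$ together with birational morphisms $\pi_{X_i}\colon S_K \to X_i$ whose exceptional loci are precisely the respective triples of $(-1)$-curves, and the images $\{P_{i1},P_{i2},P_{i3}\}$ form $G_K$-stable sets of three points. A direct canonical-class computation shows that each $X_i$ is a del Pezzo surface of degree $9$, hence a $K$-form of $\PP^2_K$, i.e., a $2$-dimensional Severi-Brauer variety. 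The three points $P_{ij}$ are non-collinear over $\Kbar$: otherwise the proper transform of the line through them would be a $(-2)$-curve on $\Sbar$, contradicting that $S_K$ is a smooth del Pezzo surface of degree $6$.

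For the opposite-algebras claim, my plan is to identify $X_2$ with the dual Severi-Brauer variety $X_1^\vee$, since $[X_1^\vee] = -[X_1]$ in $\Br K$. Geometrically, the two contractions fit in a diagram $X_1 \xleftarrow{\pi_{X_1}} S_K \xrightarrow{\pi_{X_2}} X_2$, which over $\Kbar$ is the classical Cremona transformation centered at three points in general position. Setting $H_i := \pi_{X_i}^{*}\mathcal{O}_{X_i}(1)$, on $S_{\Kbar}$ one has $H_1 = L$ and $H_2 = 2L - E_1 - E_2 - E_3$, so the linear system $|H_2|$ on $X_1$ is precisely the system of plane conics through $\{P_{11},P_{12},P_{13}\}$; descending this identification to $K$ exhibits $X_2$ as the parameter variety of such conics, which is intrinsically $X_1^\vee$. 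Equivalently, if $X_1$ is classified by a cocycle $c \in H^1(G_K, \PGL_3)$ acting on $\PP(V)$, then $X_2$ is classified by the cocycle acting on $\PP(V^\vee)$, and these cocycles pair to the trivial class in $H^2(G_K, \Kbar^{\times})$.

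I expect the main obstacle to be precisely this final identification $X_2 \isom X_1^\vee$. The existence of the contractions, the computation of degrees, the non-collinearity, and the Severi-Brauer structure all follow cleanly from standard del Pezzo geometry together with Galois descent. Checking that the two Severi-Brauer surfaces arising from the two contractions give opposite classes in $\Br K$ — rather than simply the same class — is the delicate point, and the step where I would lean most heavily on the cocycle calculation of Corn~\cite{Corn} to avoid a tedious direct verification.
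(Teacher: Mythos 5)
Your route is essentially the paper's: the proof there consists of invoking Corn's Proposition~2.1 and Theorem~2.3 \cite{Corn}, and your sketch amounts to reconstructing those results (choice of $K$ via the $\frakS_2$-projection, contraction of the two stable triples by descent, degree-$9$ del Pezzo $\Rightarrow$ Severi--Brauer, non-collinearity via ampleness of $-K_S$), deferring, as you acknowledge, to Corn's cocycle computation for the opposite-algebras claim. Two caveats are worth recording. First, your geometric shortcut for that last step is off by a duality on both sides. Writing $X_1\times_K\Kbar \isom \PP(V)$ and $W\subset \Sym^2 V^\vee$ for the Galois-invariant net of conics through $\{P_{11},P_{12},P_{13}\}$, the parameter variety of those conics is the twist of $\PP(W)$, and restricting the lifted cocycle to $W$ gives scalars $\lambda^{-2}$, so its Brauer class is $-2[X_1]=[X_1]$ in the $3$-torsion group; it is therefore isomorphic to $X_1$, not to $X_1^\vee$. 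The surface $X_2$, being the target of the map defined by that linear system, is the twist of $\PP(W^\vee)$, i.e.\ the \emph{dual} of the parameter variety, with class $+2[X_1]=-[X_1]$. Your two misstatements (``$X_2$ is the space of conics'' and ``the space of conics is $X_1^\vee$'') cancel, so the conclusion $[X_2]=-[X_1]$ is correct, but neither intermediate identification is; making this step honest is exactly the cocycle calculation you propose to import from Corn, so cite it (or carry out the $\lambda^{\pm 2}$ computation) rather than the dual-plane heuristic. Second, the proposition is over an arbitrary field, while your descent argument is phrased with $\Gal(\Kbar/K)$; in positive characteristic you must work with $\Ksep$ and use that del Pezzo surfaces are separably split, $\Pic(\Sbar)=\Pic(S\times_k \ksep)$, which is precisely the one point the paper's proof adds on top of Corn's characteristic-zero statement.
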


\begin{proof} 
	Apply \cite{Corn}*{Proposition 2.1} and \cite{Corn}*{Theorem 2.3}.  Although \cite{Corn}*{Proposition 2.1} as written requires the field $k$ to have characteristic zero, del Pezzo surfaces are separably split, i.e., $\Pic(\Sbar) = \Pic(S\times_k \ksep)$~\cite{LeidenLectures}*{\S1.4}, so there is no need to require $k$ to have characteristic zero.
\end{proof}

\begin{remark} \label{rmk: CornLinSystems}
We explain how to make the maps $\pi_{X_1}$ and $\pi_{X_2}$ of Proposition~\ref{prop: CornDelPezzoData} explicit; they are obtained by respectively contracting the Galois-invariant sets of pairwise-skew $(-1)$-curves in~\eqref{eq:GaloisInvariantSets}. The divisor classes $3L$ and $3(2L - E_1 - E_2 - E_3)$ are defined over $K$, as the following decompositions illustrate:
\begin{align*}
3L &= -K_{S} + E_1 + E_2 + E_3, \\
3(2L - E_1 - E_2 - E_3) &= -K_{S} + (L - E_1 - E_2) + (L - E_2 - E_3) + (L - E_1 - E_3).
\end{align*}
A Riemann-Roch calculation shows that the linear systems corresponding to these divisor classes are both $10$-dimensional.
The resulting $K$-morphisms $\phi_{|3L|}\colon S_K \to \PP^9_K$ and $\phi_{|3(2L - E_1 - E_2 - E_3)|}\colon S_K \to \PP^9_K$ contract precisely the sets of $(-1)$-curves in~\eqref{eq:GaloisInvariantSets}. Geometrically, the map $\phi_{|3L|}$ is the composition of the blowdown $\phi_{|L|}\colon \Sbar\to \PP^2_\Kbar$ followed by the $3$-uple embedding $\PP^2_\Kbar \to \PP^9_\Kbar$, so the image of $\phi_{|3L|}$ is a Brauer-Severi surface in $\PP^9_K$; this is the surface $X_1$, and $\pi_{X_1} = \phi_{|3L|}$; mutatis mutandis, the image of $\pi_{X_2} = \phi_{|3(2L - E_1 - E_2 - E_3)|}$ is the Brauer-Severi surface $X_2$.
\end{remark}

\begin{remark}
The field $K$ in Proposition~\ref{prop: CornDelPezzoData} coincides with the \'etale algebra $K$ of~\S\ref{ss:dP6s} whenever the latter is a field.
\end{remark}

\begin{cor} \label{cor: BrauerSubgroup} 
Let $\pi \colon X' \to \PP^2$ be a good del Pezzo fibration over a field. Let $S$ be the generic fiber, a smooth sextic del Pezzo surface over the function field $k: = \kk(\PP^2)$. Let $K$ be the quadratic \'etale algebra associated to $S$ as in~\S\ref{ss:dP6s}.  Assume that $K$ is a field. Then the two Severi-Brauer varieties $X_1$ and $X_2$ from Proposition~\ref{prop: CornDelPezzoData} each give rise to the same subgroup in $\Br K[3]$.
\qed
\end{cor}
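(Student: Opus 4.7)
The plan is to extract the conclusion essentially for free from Proposition~\ref{prop: CornDelPezzoData}. First, I would observe that, since a Severi--Brauer variety of dimension $n$ over $K$ corresponds to a central simple algebra of degree $n+1$ over $K$, each $X_i$ (being two-dimensional) gives a class $[X_i] \in \Br K$ whose index divides $3$. Because the period of a Brauer class divides its index, both $[X_1]$ and $[X_2]$ lie in the $3$-torsion subgroup $\Br K[3]$.

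The key input is the last assertion of Proposition~\ref{prop: CornDelPezzoData}, namely that $X_1$ and $X_2$ correspond to \emph{opposite} central simple algebras, i.e., $[X_2] = -[X_1]$ in $\Br K$. Combined with the previous paragraph, this equality takes place in the finite group $\Br K[3]$.

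Finally, any element $a$ of an abelian group of exponent $3$ generates the same cyclic subgroup as $-a = 2a$, so
\[
\langle [X_1]\rangle \;=\; \{0,\,[X_1],\,-[X_1]\} \;=\; \{0,\,[X_1],\,[X_2]\} \;=\; \langle [X_2]\rangle
\]
inside $\Br K[3]$. This is the claimed equality of subgroups.

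There is no genuine obstacle here; the entire content of the corollary has already been absorbed into Proposition~\ref{prop: CornDelPezzoData} via Corn's result~\cite{Corn}. The only thing to verify is the (standard) fact that $X_1, X_2$ are $3$-torsion in $\Br K$, which follows from the dimension of the Severi--Brauer surfaces.
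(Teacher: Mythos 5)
Your proposal is correct and matches the paper's intent: the corollary is stated with no separate proof precisely because it is immediate from Proposition~\ref{prop: CornDelPezzoData}, exactly as you argue (degree-$3$ algebras give $3$-torsion classes, and opposite classes generate the same cyclic subgroup). Nothing further is needed.
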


\subsection{Key Observation} \label{subsec: KeyObservation}

With notation as in Corollary~\ref{cor: BrauerSubgroup}, let $Y$ denote the normalization of $\PP^{2}$ in $K$. Then $Y$ is a double cover of $\PP^2$, and the analysis in \cite[Proposition~18]{AHTVA} shows that it is branched along a smooth plane sextic, and is thus a K3 surface of degree 2. On the other hand, we saw in the proof of Theorem~\ref{thm: Kuznetsov} that to the data of $\pi$ we may associate the Hilbert scheme $\calH_k = \Hilb_{3n+1}(X'/\PP^2)$, whose structure morphism has a Stein factorization
\[
\calH_k \to \calY_k \to \PP^2_k
\]
consisting of an \'etale $\PP^2$-bundle followed by a double cover.  We saw that the map $\calH_k \to \calY_k$ in fact gives rise to a subgroup $\Gamma \subseteq \Br(\calY_k)[3]$. The remainder of this section is devoted to a proof of the following proposition.

\begin{prop} \label{prop: EquivAlgebras}
	The surfaces $\calY_k$ and $Y$ coincide, and the image of $\Gamma \subseteq \Br(\calY_k)[3]$ for the inclusion $\Br(Y) \hookrightarrow \Br \kk(Y)$ coincides with the subgroup of $\Br \kk(Y)[3]$ from Corollary~\ref{cor: BrauerSubgroup}.  In particular, the Brauer classes in Corollary~\ref{cor: BrauerSubgroup} are unramified on the K3 surface $Y$.
\end{prop}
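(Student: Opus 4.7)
The plan is to prove the proposition in two steps: (i) identify $\calY_k$ with $Y$ as double covers of $\PP^2_\kk$; and then (ii) show that the image in $\Br\kk(Y)[3]$ of the Brauer subgroup $\Gamma\subseteq\Br(\calY_k)[3]$ equals the subgroup from Corollary~\ref{cor: BrauerSubgroup}. The ``in particular'' unramifiedness claim will then be immediate, since we will have realized the Corn classes as lying in $\Br(\calY_k)=\Br(Y)$ inside $\Br\kk(Y)$ via the injection coming from regularity of $Y$.

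For step (i), both $\calY_k$ and $Y$ are finite normal double covers of the regular scheme $\PP^2_\kk$, and are thus determined by their generic fibers, each a quadratic \'etale algebra over $k=\kk(\PP^2)$. By construction, this algebra for $Y$ is $K$. For $\calY_k$, I would use the moduli interpretation in Theorem~\ref{thm: Kuznetsov}: over $\Kbar$, the Hilbert scheme $\Hilb_{3n+1}(S_{\Kbar})$ splits into two connected components, one for each divisor class on $\Sbar$ of anticanonical degree $3$ and arithmetic genus $0$, namely $|L|$ and $|2L-E_1-E_2-E_3|$, each a copy of $\PP^2$ by Riemann-Roch. The Galois involution on the hexagon of $(-1)$-curves described in \S\ref{ss:dP6s} swaps the two skew triples $\{E_1,E_2,E_3\}$ and $\{L-E_1-E_2,L-E_1-E_3,L-E_2-E_3\}$, and therefore swaps the two linear systems. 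The quadratic \'etale algebra parametrizing these Galois orbits is then $K$, so $\calY_k=Y$ by uniqueness of the normalization; in particular $\kk(Y)=K$.

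For step (ii), let $\alpha\in\Gamma$ be a generator, corresponding to the \'etale $\PP^2$-bundle $\calH_k\to\calY_k=Y$. At the generic point of $Y$, $\alpha$ restricts to the class in $\Br K[3]$ of the Severi-Brauer surface $\calH_K\to\Spec K$, which parametrizes the family of twisted cubics on $S_K$ lying in, say, the linear system $|L|$. I would then compare this to Corn's $X_1$. Remark~\ref{rmk: CornLinSystems} identifies $X_1$ as the image of $\phi_{|3L|}\colon S_K\to\PP^9_K$, which geometrically factors through the blowdown of $\Sbar$ contracting $\{E_1,E_2,E_3\}$ followed by the $3$-uple embedding of $\PP^2_\Kbar$. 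Under this blowdown, $\Kbar$-points of $X_1$ correspond bijectively and Galois-equivariantly to elements of $|L|$ on $S_\Kbar$. I would invoke Koll\'ar's results~\cite{Kollar} on the geometry of Severi-Brauer varieties to upgrade this point-level identification to an identification of $\calH_K$ with $X_1$ (up to duality, i.e.\ inversion in $\Br K$). The same analysis applied to the blowdown of the other skew triple produces $X_2$ with $[X_2]=[X_1]^{-1}$; since both generate the same subgroup of $\Br K[3]$, this subgroup matches the image of $\Gamma$.

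The main obstacle is the Brauer-class comparison in step (ii). The bijection on $\Kbar$-points between $\calH_K$ and $X_1$ is geometrically transparent, but promoting it to an identification of Severi-Brauer surfaces (rather than merely of their underlying sets of geometric points) requires Koll\'ar's theorem. One must also track the covering involution $\iota^*\alpha=\alpha^{-1}$ on $\Br(Y)$ and check that it corresponds to the swap $[X_1]\leftrightarrow [X_2]$ coming from the choice of triple in Corollary~\ref{cor: BrauerSubgroup}, so that both constructions yield the same subgroup of $\Br\kk(Y)[3]$.
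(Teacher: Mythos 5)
Your proposal follows essentially the same route as the paper: identify $\calY_k$ with $Y$ through the quadratic algebra $K$ over which the two geometric components of the relative Hilbert scheme (the linear systems $|L|$ and $|2L-E_1-E_2-E_3|$) become defined, and then invoke Koll\'ar's theory of Severi--Brauer varieties attached to Galois-invariant line bundle classes to match the Hilbert-scheme description of the class with Corn's contraction description --- exactly what the paper does via its twisted line bundles $\calL_1,\calL_2$, Constructions~\ref{construction1} and~\ref{construction2}, and Lemma~\ref{lem: KollarToCorn}. Your remark that any residual ambiguity is only inversion in $\Br K$, hence harmless because the statement concerns the order-$3$ subgroup, is consistent with how the paper phrases Corollary~\ref{cor: BrauerSubgroup} and the proposition.
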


\subsection{Twisted line bundles} \label{subsec: TwistedLineBundles}

The following definition, due to Koll\'ar~\cite{Kollar}, provides the bridge necessary to connect the two groups in Proposition~\ref{prop: EquivAlgebras}.

\begin{defn} Let $S$ be a geometrically reduced, geometrically connected, proper $K$-scheme. Given a line bundle $\calL$ on $\Sbar$, let $T(\calL)$ denote the category of vector bundles $\calF$ on $S$ such that $\calF_{\Kbar} \isom \oplus_i \calL$, a direct sum of copies of $\calL$. Then $\calL$ is called a \defi{twisted line bundle} on $S$ if $T(\calL)$ contains a non-zero vector bundle.
\end{defn}

\begin{lemma}[\cite{Kollar}*{Lemma~8}] Let $\calL$ be a twisted line bundle on $S$. Then there is a unique vector bundle $E(\calL) \in T(\calL)$ such that every other member of $T(\calL)$ is a sum of copies of $E(\calL)$. 
\qed
\end{lemma}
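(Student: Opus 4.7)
The plan is to define $E(\calL)$ as any object of $T(\calL)$ of minimal positive rank and then show that every other object of $T(\calL)$ is forced to be a direct sum of copies of it. The whole argument turns on understanding the endomorphism algebra $A := \End_S(E)$. Because $S$ is geometrically reduced, geometrically connected, and proper over $K$, one has $H^0(\bar{S},\OO_{\bar{S}}) = \Kbar$, hence $\End_{\bar{S}}(\calL) = \Kbar$ and so $\End_{\bar{S}}(E_{\Kbar}) = \End_{\bar{S}}(\calL^{\oplus r}) = M_r(\Kbar)$ where $r := \rk E$. Since $\End_S(E)\otimes_K\Kbar = \End_{\bar{S}}(E_{\Kbar})$, the algebra $A$ is a central simple $K$-algebra of degree $r$.

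The key step is to show that $A$ is a \emph{division} algebra. Suppose $A$ contains a nontrivial idempotent $e$. Then $E \cong eE \oplus (1-e)E$ as vector bundles on $S$, and after base change to $\Kbar$ each summand becomes a direct summand of $\calL^{\oplus r}$ in coherent $\OO_{\bar{S}}$-modules. But any direct summand of $\calL^{\oplus r}$ on $\bar{S}$ is isomorphic to $\calL^{\oplus a}$ for some $0 \le a \le r$ (idempotents in $M_r(\Kbar) = \End_{\bar{S}}(\calL^{\oplus r})$ correspond exactly to such block decompositions, via $\End(\calL)=\Kbar$). Hence both summands lie in $T(\calL)$ with strictly smaller rank, contradicting the minimality of $E$. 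So $A$ has no nontrivial idempotents, and a CSA with this property is a division algebra. This step is the main obstacle: everything else is bookkeeping, but this argument is what makes $E$ behave like a simple object.

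Now for any $\calF \in T(\calL)$ with $\calF_{\Kbar} \cong \calL^{\oplus m}$, consider $M := \Hom_S(E,\calF)$ as a right $A$-module. Faithfully flat base change gives $\dim_K M = \dim_{\Kbar}\Hom_{\bar{S}}(\calL^{\oplus r},\calL^{\oplus m}) = rm$. Since $A$ is a division algebra, every finitely generated right $A$-module is free, so $M \cong A^{\oplus k}$ where $k = rm/r^2 = m/r$; in particular $r \mid m$. Consider the evaluation map
\[
\mathrm{ev} \colon M \otimes_A E \longrightarrow \calF, \qquad \phi \otimes x \longmapsto \phi(x).
\]
To verify $\mathrm{ev}$ is an isomorphism, it suffices to check this after base change to $\Kbar$, where it reduces to the Morita-type statement that the natural map
\[
\Mat_{m\times r}(\Kbar) \otimes_{M_r(\Kbar)} \calL^{\oplus r} \longrightarrow \calL^{\oplus m}
\]
given by matrix multiplication is an isomorphism; this can be checked locally where $\calL$ trivializes, using the identities $E_{ij} = E_{i1}E_{1j}$ to normalize any simple tensor. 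Choosing an $A$-basis of $M$ then identifies $M\otimes_A E$ with $E^{\oplus k}$, so $\calF \cong E^{\oplus k}$.

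Finally, uniqueness of $E(\calL)$ is immediate: if $E'$ is another member of $T(\calL)$ of minimal rank, applying the previous paragraph with $\calF = E'$ yields $E' \cong E^{\oplus k'}$ where $k' = \rk(E')/\rk(E) = 1$, so $E' \cong E$. Declare $E(\calL) := E$; the displayed argument shows that every other object of $T(\calL)$ is a direct sum of copies of $E(\calL)$, as required.
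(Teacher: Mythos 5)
Your argument is correct: taking a member of $T(\calL)$ of minimal rank, showing its endomorphism algebra is a central simple $K$-algebra (via $H^0(\Sbar,\calO_{\Sbar})=\Kbar$ and flat base change) with no nontrivial idempotents, hence a division algebra, and then deducing $\calF\isom E(\calL)^{\oplus k}$ from the evaluation map $\Hom_S(E,\calF)\otimes_A E\to\calF$ checked after base change, is exactly the standard Wedderburn/Morita-style proof. The paper itself gives no proof of this statement --- it is quoted as Lemma~8 of Koll\'ar's paper on Severi--Brauer varieties --- and your reconstruction matches the spirit of that treatment, so there is nothing to correct.
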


Let $e := \rank E(\calL)$. Then $L^{(e)} := \det E(\calL)$ is a line bundle on $S$ such that 
\begin{equation}
\label{eq:e}
 L^{(e)}_{\Kbar} \isom \calL^e.
 \end{equation}

\begin{lemma}[\cite{Kollar}*{Claim 7.3}] \label{lem: KollarGaloisFixed}
	Let $\calL$ be a line bundle on $\Sbar$. Then $\calL$ is a twisted line bundle on $S$ if and only if
	$\calL^\sigma \isom \calL$ for every $\sigma \in \Gal(\Ksep/K)$.
\qed
\end{lemma}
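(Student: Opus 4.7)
The plan is to prove the two directions separately, using Krull--Schmidt uniqueness of decompositions for the ``only if'' direction and Galois descent together with a Brauer-theoretic obstruction class for the ``if'' direction.

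For the ``only if'' direction, I would start with a non-zero $\calF \in T(\calL)$, so $\calF_\Kbar \isom \calL^{\oplus n}$ for some $n \geq 1$. Because $\calF$ is defined over $K$, pullback by any $\sigma \in \Gal(\Ksep/K)$ yields a canonical isomorphism $(\calF_\Kbar)^\sigma \isom \calF_\Kbar$, while at the same time $(\calF_\Kbar)^\sigma \isom (\calL^\sigma)^{\oplus n}$. Combining these and invoking Krull--Schmidt/Atiyah (applicable because coherent sheaves on a proper $\Kbar$-scheme form a Krull--Schmidt category and each line bundle is indecomposable), unique factorization into indecomposables forces $\calL^\sigma \isom \calL$.

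For the ``if'' direction, I would choose, for each $\sigma$, an isomorphism $\phi_\sigma\colon \calL^\sigma \to \calL$. The failure of $\{\phi_\sigma\}$ to descend $\calL$ is measured by a $2$-cocycle $c(\sigma,\tau)$ with values in $\Aut(\calL) = \HH^0(\Sbar,\G_m) = \Kbar^*$, where the last equality uses that $S$ is geometrically reduced, geometrically connected, and proper. The class $\alpha := [c] \in \HH^2(\Gal(\Ksep/K), \Kbar^*) = \Br(K)$ is independent of the choice of $\phi_\sigma$; by the Leray spectral sequence for $S \to \Spec K$ it is precisely the obstruction to $\calL$ descending to a line bundle on $S$. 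If $\alpha = 0$ we are done. In general, I would kill $\alpha$ by tensoring with an auxiliary Galois-equivariant vector space: let $A$ be a central simple $K$-algebra with Brauer class $-\alpha$, and let $V$ be a simple left $A$-module. Then $V \otimes_K \Ksep$ carries a semilinear Galois structure whose cocycle (with respect to any $\Ksep$-basis) is inverse to $c$, so the diagonal descent data on $\calL \otimes_\Kbar (V \otimes_K \Kbar)$ satisfies the cocycle condition on the nose and descends to a coherent sheaf $\calF$ on $S$. By construction $\calF_\Kbar \isom \calL^{\oplus \dim_K V}$, and $\calF \neq 0$ because $V \neq 0$.

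The hard part will be pinning down the compatibility of the two semilinear structures so that the product cocycle is honestly a coboundary, which requires careful bookkeeping of the cocycle normalizations. A cleaner route is to reinterpret the lemma as an equivalence between Galois-invariant line bundles on $\Sbar$ and $\alpha$-twisted invertible sheaves on $S$ (in the sense of the $\G_m$-gerbe over $\Spec K$ classified by $\alpha$); from that vantage point the ``if'' direction is simply the observation that any $\alpha^{-1}$-twisted finite-dimensional $\Gal$-representation, such as a simple module over an Azumaya algebra of class $-\alpha$, provides the tensor factor needed to descend.
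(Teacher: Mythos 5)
The paper itself gives no proof of this lemma: it is quoted verbatim from Koll\'ar's paper (Claim 7.3 of \cite{Kollar}) with a \qed, so there is nothing internal to compare against; your argument is, however, essentially the standard descent-theoretic proof and is the same in spirit as Koll\'ar's own framework of twisted sheaves. The ``only if'' direction is fine as written: $(\calF_{\Kbar})^\sigma\isom\calF_{\Kbar}$ because $\calF$ comes from $S$, and the Krull--Schmidt property for coherent sheaves on a proper scheme over a field (finite-dimensional endomorphism algebras) forces $\calL^\sigma\isom\calL$ since line bundles are indecomposable. For the ``if'' direction, the cocycle $c(\sigma,\tau)\in\Aut(\calL)=\HH^0(\Sbar,\calO)^\times=\Kbar^\times$ (here geometric reducedness, connectedness and properness are exactly what you need) and the plan of killing its class by tensoring with a twisted vector space of inverse class is correct, and effectivity of the resulting descent datum for quasi-coherent sheaves is standard.

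Two points deserve tightening. First, the sentence ``$V\otimes_K\Ksep$ carries a semilinear Galois structure whose cocycle is inverse to $c$'' is not literally true: the naive structure $1\otimes\sigma$ on $V\otimes_K\Ksep$ is an honest (untwisted) representation. The twisted vector space you want is the multiplicity space $\Hom_{M_n(\Ksep)}(\Ksep^n,V\otimes_K\Ksep)$ after choosing a splitting $A\otimes_K\Ksep\isom M_n(\Ksep)$ (equivalently, work directly with the crossed-product algebra attached to a cocycle representing the inverse class); only after this Morita bookkeeping do you get semilinear maps $\psi_\sigma$ with $\psi_\sigma\psi_\tau=c(\sigma,\tau)^{-1}\psi_{\sigma\tau}$ on the nose, and you must also adjust the $\phi_\sigma$ by a $1$-cochain so that the two cocycles cancel exactly rather than merely cohomologously --- this is precisely the ``bookkeeping'' you defer, and it is where the actual work lies. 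Second, there are mild finiteness/continuity issues: $\calL$ and the $\phi_\sigma$ should be spread out over a finite (separable, in the application) subextension so that the cocycle is continuous with values in $\Ksep^\times$ and Galois descent applies; in positive characteristic one must also reconcile $\Kbar$ with $\Ksep$, which Koll\'ar's treatment handles but your sketch does not address. None of these invalidates the approach; with those repairs your argument is a correct reconstruction of the cited claim.
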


If $\calL$ is a line bundle on $S$ then $|\calL| = \PP(H^0(S,\calL)^{\vee})$ is the linear system associated to $\calL$. However, if $\calL$ is a twisted line bundle on $S$, there is no natural way to define $H^0(S,\calL)$. Instead, Koll\'ar provides several equivalent notions of a twisted linear system $|\calL|$ associated to $\calL$, each one of which yields \emph{the same} Severi-Brauer variety (up to $K$-isomorpshim)~\cite{Kollar}. 

\begin{construction}
 \label{construction1} 
 Let $\calL$ be a twisted line bundle on $S$ with $e := \rank E(\calL)$. Then there is a line bundle $L^{(e)}$ on $S$ such that $\calL_{\Kbar}^{(e)} \isom \calL^{e}$ (see~\eqref{eq:e}) and hence, in the notation of \cite{Kollar} there exists an embedding
\[ v_{e, \calL} \colon |\calL_{\Kbar}| \into |\calL^e| \isom |L^{(e)}|_{\Kbar}.\]
\end{construction}

\begin{lemma}[\cite{Kollar}*{Corollary 37.4}]  The image of $v_{e, \calL}$ is a Severi-Brauer $K$-variety, denoted by $|\calL|$.\qed
\end{lemma}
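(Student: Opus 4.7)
The plan is to identify the image of $v_{e,\calL}$ inside the $K$-projective variety $|L^{(e)}|$ as a Galois-stable Veronese subvariety, and then invoke Galois descent to realize it as a $K$-form of a projective space.

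First I would unpack the map on geometric points. Under the identification $L^{(e)}_{\Kbar} \isom \calL^{e}$, the embedding $v_{e,\calL}$ is the $e$-th Veronese map $|\calL_{\Kbar}| \hookrightarrow |\calL^{e}|$ sending the class of a section $s \in H^0(\Sbar,\calL)$ to the class of $s^{\otimes e} \in H^0(\Sbar,\calL^{e})$. In particular, the image is a closed subvariety of $|L^{(e)}|_{\Kbar}$ that is abstractly isomorphic to $\PP^{n}_{\Kbar}$, where $n+1 = h^0(\Sbar,\calL)$.

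Second, I would establish Galois-stability. The $K$-structure on $L^{(e)}$ induces a natural action of $\Gal(\Ksep/K)$ on $|L^{(e)}|_{\Kbar}$. By Lemma~\ref{lem: KollarGaloisFixed}, for each $\sigma$ there exists an isomorphism $\calL^{\sigma} \isom \calL$, so twisting by $\sigma$ carries $H^0(\Sbar, \calL)$ into itself (up to such an isomorphism), and hence sends decomposable $e$-th powers $s^{\otimes e}$ to decomposable $e$-th powers. This pins down the image of $v_{e,\calL}$ set-theoretically as a Galois-stable subscheme of $|L^{(e)}|_{\Kbar}$. Then Galois descent for closed subschemes of the quasi-projective $K$-variety $|L^{(e)}|$ produces a closed $K$-subvariety $|\calL| \subset |L^{(e)}|$ whose base change to $\Kbar$ recovers the Veronese image. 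Since the latter is isomorphic to $\PP^{n}_{\Kbar}$, the descended variety $|\calL|$ is a $K$-form of $\PP^n$, i.e., a Severi-Brauer $K$-variety.

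The main obstacle is Galois-stability: the isomorphisms $\calL^\sigma \isom \calL$ of Lemma~\ref{lem: KollarGaloisFixed} are not canonical, differing by nonzero scalars, and this ambiguity is precisely what allows the resulting $K$-form to be a nontrivial twist of $\PP^n$. The argument must show that after passing to projectivizations and applying the $e$-th power map, these scalars drop out, so that the image itself (rather than a privileged set of sections) is canonically Galois-invariant. Kollár's framework, which defines $L^{(e)}$ through the intrinsic rank-$e$ vector bundle $E(\calL)$ rather than through $H^0(\calL)$, is designed to accommodate precisely this subtlety, and I would expect the descent to go through cleanly once $L^{(e)}$ is understood as $\det E(\calL)$.
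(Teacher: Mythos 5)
Your outline is mathematically sound, but it is worth knowing that the paper does not prove this statement at all: the lemma is imported verbatim from Koll\'ar \cite{Kollar}*{Corollary 37.4}, and the \qed\ reflects that the proof lives entirely in that reference. What you have written is essentially a reconstruction of the descent argument behind Koll\'ar's result, and it is consistent with how the paper later uses the construction (in Remark~\ref{rmk: CornLinSystems} and Lemma~\ref{lem: KollarToCorn} the map is identified geometrically as a blow-down followed by the $3$-uple embedding, i.e.\ exactly your Veronese picture). Your two main points are the right ones: geometrically the image of $v_{e,\calL}$ is the Veronese-type image of $|\calL_{\Kbar}|$, hence isomorphic to $\PP^n_{\Kbar}$; and it is stable under the semilinear Galois action on $|L^{(e)}|_{\Kbar}$ because, by Lemma~\ref{lem: KollarGaloisFixed}, $\sigma$ sends a divisor of the form $eD$ with $\calO(D)\isom\calL$ to another divisor of the same form, the scalar ambiguity in the isomorphisms $\calL^\sigma\isom\calL$ disappearing after projectivization, exactly as you say. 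A complete write-up would still need two routine supplements: the identification $L^{(e)}_{\Kbar}\isom\calL^{e}$ coming from $E(\calL)_{\Kbar}\isom\calL^{\oplus e}$ is itself only canonical up to a scalar (again harmless on linear systems), and for the descent step one should check that the image is a closed subscheme defined over a finite separable extension and invoke effectivity of Galois descent for closed subschemes of the $K$-projective space $|L^{(e)}|$; both are standard, and the latter is where Koll\'ar's intrinsic definition of $L^{(e)}=\det E(\calL)$ does its work, as you anticipated. So: correct in outline, but a genuinely different route only in the trivial sense that the paper's route is a citation rather than an argument.
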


\begin{construction} \label{construction2} Let $\calL$ be a twisted line bundle on $S$. Let $|\calL|$ denote the irreducible component of the Hilbert scheme of $S$ parametrizing subschemes $H \subset S$ such that $H_{\Kbar}$ is in the linear system $|\calL_{\Kbar}|$. By Lemma~\ref{lem: KollarGaloisFixed}, since $\calL$ is a twisted line bundle we have $\calL^\sigma \isom \calL$ for each $\sigma \in \Gal(\Ksep/K)$. Hence $|\calL|$ is invariant under the Galois group, and thus by \cite{Kollar}*{Lemma 22}, it is a $K$-variety. Moreover, since $|\calL|_{\Kbar} \isom |\calL_{\Kbar}| \isom \PP(H^0(S,\calL_{\Kbar})^{\vee}) $, it is in fact a Severi-Brauer variety. 
\end{construction}

\subsubsection{An extended example}
\label{ss: twistedlinebundlesondP6}
	Let $\pi \colon X' \to \PP^2$ be a du Val family of sextic del Pezzos over a field.  The generic fiber $S$ of $\pi$ is a smooth sextic del Pezzo surface over the function field $k: = \kk(\PP^2)$. Let $K$ be the quadratic \'etale algebra associated to $S$ as in~\S\ref{ss:dP6s}.  Assume that $K$ is a field. Consider the line bundles on $S_{\Kbar}$ given by $\calL_1 := \calO(L)$ and $\calL_2:= \calO(2L - E_1 - E_2 - E_3)$. We claim that $\calL_1, \calL_2$ are twisted line bundles on $S_{K}$.  To see this, let $\sigma \in \Gal(\Kbar/K)$. Recall that $K$ is precisely the extension over which the triples of pairwise skew $(-1)-$curves in~\eqref{eq:GaloisInvariantSets} are Galois stable. The anticanonical divisor on $S$ is $-K_S \sim 3 L - E_1 - E_2 - E_3$, thus in $\Pic(S_{K})$ we have the relations:
	\begin{eqnarray*}
	3L & \sim & -K_S + (E_1 + E_2  + E_3) \\
	3(2L - E_1 - E_2 - E_3) & \sim & -K_S + (L- E_1 - E_2) + (L - E_2 - E_3) + (L - E_1 - E_3)
	\end{eqnarray*}
	hence $(\calL_1)^\sigma \isom \calL_1$, and $\calL_2^\sigma \isom \calL_2$.

\begin{lemma} \label{lem: rankOfE}
	Let $e_1 := \rank E(\calL_1)$ and $e_2 := \rank E(\calL_2)$. Assume that $\calL_1$ and $\calL_2$ are not line bundles on $S_K$. Then $e_1 = e_2 = 3$.
\end{lemma}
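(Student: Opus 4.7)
The plan is to prove $e_1 = 3$; an identical argument yields $e_2 = 3$. I sandwich $e_1$ between matching bounds coming from the Brauer obstruction to $\calL_1$ descending from $S_{\Kbar}$ to $S_K$.

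\textbf{Lower bound $e_1 \geq 3$.} The Hochschild--Serre spectral sequence for $\G_m$ on $S_K$, combined with Hilbert 90, yields the exact sequence
\[
0 \to \Pic(S_K) \to \Pic(S_\Kbar)^{\Gal(\Kbar/K)} \xrightarrow{\delta} \Br(K).
\]
The class $L$ is $\Gal(\Kbar/K)$-invariant: from $3L \sim -K_S + E_1 + E_2 + E_3$, both summands on the right are Galois-invariant ($-K_S$ always, and $E_1 + E_2 + E_3$ because $K$ is precisely the field over which $\{E_1,E_2,E_3\}$ becomes stable), so $3L$ is fixed, and hence so is $L$ by torsion-freeness of $\Pic(S_\Kbar)$. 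Set $\alpha := \delta(\calL_1)$. The same relation expresses $\calL_1^{\otimes 3}$ as a line bundle descending to $S_K$, so $3\alpha = 0$; the hypothesis that $\calL_1$ does not descend to $S_K$ gives $\alpha \neq 0$, whence $\alpha$ has order exactly $3$. Since $L^{(e_1)} := \det E(\calL_1)$ is a line bundle on $S_K$ with $L^{(e_1)}_\Kbar \cong \calL_1^{\otimes e_1}$, we obtain $e_1\alpha = \delta(L^{(e_1)}_\Kbar) = 0$, forcing $3 \mid e_1$.

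\textbf{Upper bound $e_1 \leq 3$.} The Brauer class of the Severi-Brauer surface $X_1$ from Proposition~\ref{prop: CornDelPezzoData} equals $\alpha$, so $X_1 \cong \SB(A)$ for some degree-$3$ central simple $K$-algebra $A$ with $[A] = \alpha$. Applied to the twisted line bundle $\calN$ on $X_1$ characterized by $\calN_\Kbar \cong \calO_{\PP^2_\Kbar}(1)$, Koll\'ar's construction produces a rank-$3$ vector bundle $\calE := E(\calN)$ on $X_1$ with $\calE_\Kbar \cong \calO(1)^{\oplus 3}$. By Remark~\ref{rmk: CornLinSystems}, the map $\pi_{X_1}\colon S_K \to X_1$ factors $\Kbar$-geometrically as the blowdown $\phi_{|L|}\colon S_\Kbar \to \PP^2_\Kbar$ followed by the $3$-uple embedding, so $\pi_{X_1}^*\calN_\Kbar \cong \calO(L) = \calL_1$. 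Hence $\pi_{X_1}^*\calE$ is a rank-$3$ vector bundle on $S_K$ lying in $T(\calL_1)$. By Koll\'ar's uniqueness result, $\pi_{X_1}^*\calE \cong E(\calL_1)^{\oplus k}$ for some $k \geq 1$, so $3 = k e_1 \geq e_1$. Combined with the lower bound, $e_1 = 3$.

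The main technical point is pinning down the rank-$3$ bundle $\calE$ on the non-split Severi-Brauer surface $X_1$ and verifying the pullback identity $\pi_{X_1}^*\calN_\Kbar \cong \calL_1$; the rest is formal manipulation with Koll\'ar's theory of twisted line bundles and the Brauer obstruction sequence. The symmetric argument for $e_2$ is identical, with $L$, $\calL_1$, $\pi_{X_1}$, $X_1$ replaced by $2L - E_1 - E_2 - E_3$, $\calL_2$, $\pi_{X_2}$, $X_2$, and uses the decomposition $3(2L - E_1 - E_2 - E_3) \sim -K_S + (L - E_1 - E_2) + (L - E_2 - E_3) + (L - E_1 - E_3)$ to produce both the Galois-invariance of $2L - E_1 - E_2 - E_3$ and the descent of $\calL_2^{\otimes 3}$ driving the lower bound.
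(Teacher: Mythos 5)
Your proof is correct, but it is a genuinely different argument from the one in the paper. The paper handles both bounds with elementary Picard-lattice descent: for $e_1\le 3$ it writes down the explicit rank-$3$ bundle $\calI_1=\calO(D_1)\oplus\calO(D_2)\oplus\calO(D_3)$, where the $D_i$ are the three "triangle" divisors $(L-E_i-E_j)+E_i+E_j$, each linearly equivalent to $L$; since Galois over $K$ permutes the set $\{D_1,D_2,D_3\}$, this sum descends to $S_K$ and lies in $T(\calL_1)$. It then excludes $e_1=1$ by hypothesis and $e_1=2$ by noting that $3L$ is defined over $K$, so $e_1=2$ would force $L=3L-2L$ to descend, a contradiction; no Brauer groups or Severi--Brauer geometry are invoked. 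You instead get the lower bound from the Hochschild--Serre sequence $0\to\Pic(S_K)\to\Pic(S_{\Kbar})^{\Gal(\Kbar/K)}\to\Br(K)$, showing the obstruction class $\alpha$ has order exactly $3$ and divides $e_1$, and the upper bound by pulling back the rank-$3$ dual tautological bundle from the Severi--Brauer surface $X_1$ along $\pi_{X_1}$. Both routes are sound; yours is conceptually attractive (it identifies $e_1$ with the period of the obstruction class, which is the general mechanism behind the lemma), but it leans on heavier standard inputs: the Hochschild--Serre sequence and the existence of a rank-$3$ bundle on $X_1$ that is geometrically $\calO(1)^{\oplus 3}$, which you rightly flag as the technical crux. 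Two small remarks: your assertion that the Brauer class of $X_1$ equals $\alpha$ is never justified (in the paper this identification is essentially the content of the later Lemma~\ref{lem: KollarToCorn}), but it is also not needed --- all you use is that $X_1$ is a $2$-dimensional Severi--Brauer variety carrying such a rank-$3$ bundle; and calling $E(\calN)$ "rank $3$" presupposes $X_1$ is non-split, which does follow from the hypothesis (a split $X_1$ would make $\calL_1$ an honest line bundle on $S_K$), though rank $\le 3$ is all your argument requires. The paper's proof buys self-containedness and brevity; yours buys generality and an explanation of where the number $3$ really comes from.
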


\begin{proof}
	Consider the rank 3 vector bundle $\calI_1$ on $S_{\Kbar}$ defined by 
	\begin{align*}
	\calI_1 & := \calO( (L- E_1 - E_2) + (E_1) + (E_2)) + \calO( (L- E_2 - E_3) + (E_2) +(E_3))\\
	& \,\, + \calO( (L- E_1 - E_3) + (E_1) + (E_3) ).
	\end{align*}
	Then $\calI_1 \isom \calO(L)^{\oplus 3}$, and as $\{E_1, E_2, E_3 \}$ is a Galois invariant set, $\calI_1$ descends to a rank 3 vector bundle on $S_K$. Thus, $\calI_1 \in T(\calL_1)$, and $e_1 \le 3$.

	By assumption, $\calL_1$ is not a line bundle on $S_K$, hence $e_1 \ne 1$. Finally, if $e_1 = 2$, then $3L - 2L = L$ would be defined over $K$, a contradiction. The proof that $e_2 = 3$ follows similarly by considering the rank 3 vector bundle $\calI_2 \isom \calO(2L - E_1 - E_2 - E_3)^{\oplus 3}$ over $\Kbar$ defined by
	\begin{align*}
	\calI_2 & :=  \calO( (L - E_1 - E_3) + (L - E_1 -E_2) + (E_1) ) \\
	& \,\, + \calO( (L - E_1 - E_2) + (L - E_2 -E_3) + (E_2) )   \\
	& \,\, + \calO( (L - E_2 - E_3) + (L - E_1 -E_3) + (E_3) ). \tag*{\qed}
	\end{align*}
	\hideqed
\end{proof}

\begin{lemma} \label{lem: KollarToCorn} 
	The Severi-Brauer varieties $|\calL_1|, |\calL_2|$  are isomorphic to the Severi-Brauer varieties $X_1$, $X_2$ associated to $S_K$ as in Proposition \ref{prop: CornDelPezzoData}.
\end{lemma}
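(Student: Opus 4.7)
My plan is to identify both $|\calL_i|$ and $X_i$ as closed subvarieties of one and the same $K$-form of $\PP^9$, namely $\PP(H^0(S_K,\calO(3L))^\vee)$ (respectively $\PP(H^0(S_K,\calO(3(2L-E_1-E_2-E_3)))^\vee)$), and then invoke Galois descent once I check that these subvarieties agree after base change to $\Kbar$.

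First I would pin down the line bundle $L^{(e_1)}$ appearing in Construction~\ref{construction1}. By Lemma~\ref{lem: rankOfE}, $e_1 = 3$, so $L^{(3)}$ is a line bundle on $S_K$ satisfying $L^{(3)}_{\Kbar}\isom\calO(L)^3=\calO(3L)$. Since $3L \sim -K_S + (E_1+E_2+E_3)$ and the divisor class $E_1+E_2+E_3$ is $\Gal(\Kbar/K)$-invariant (being the sum of the three exceptional curves in the first Galois-stable triple of~\eqref{eq:GaloisInvariantSets}), the class $3L$ descends to a line bundle on $S_K$; by uniqueness of $L^{(3)}$ we may identify it with $\calO(3L)$ on $S_K$. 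Thus the embedding $v_{3,\calL_1}\colon |\calL_{1,\Kbar}|\into |L^{(3)}|_{\Kbar}$ is, geometrically, the $3$-uple Veronese embedding $|L|_{\Kbar}=\PP^2_{\Kbar}\into\PP^9_{\Kbar}=|3L|_{\Kbar}$, whose image is the Severi-Brauer variety $|\calL_1|\subseteq\PP(H^0(S_K,\calO(3L))^\vee)$.

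Next, by Remark~\ref{rmk: CornLinSystems}, the Severi-Brauer surface $X_1$ is by definition the image of $\phi_{|3L|}\colon S_K\to\PP(H^0(S_K,\calO(3L))^\vee)$, and over $\Kbar$ this morphism factors as the blowdown $\phi_{|L|}\colon S_{\Kbar}\to\PP^2_{\Kbar}$ followed by the $3$-uple Veronese. Consequently the closed $\Kbar$-subschemes $X_{1,\Kbar}$ and $|\calL_1|_{\Kbar}$ of $\PP^9_{\Kbar}$ coincide (both equal the Veronese surface). Being two closed $K$-subschemes of the same $K$-form of $\PP^9$ with identical base changes to $\Kbar$, they are equal as $K$-subvarieties, and therefore isomorphic as Severi-Brauer varieties over $K$.

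The argument for $|\calL_2|\isom X_2$ is identical, with the divisor class $3(2L-E_1-E_2-E_3)\sim -K_S+(L-E_1-E_2)+(L-E_2-E_3)+(L-E_1-E_3)$ (manifestly Galois-invariant) playing the role of $3L$, and the blowdown $\phi_{|2L-E_1-E_2-E_3|}$ contracting the second Galois-stable skew triple of $(-1)$-curves in~\eqref{eq:GaloisInvariantSets}, followed again by the Veronese. The only place where care is needed is the descent step, i.e.\ checking that the $K$-rational line bundle used in Construction~\ref{construction1} coincides with the one used to define $\phi_{|3L|}$; once this matching is made precise, the equality of the two Severi-Brauer surfaces inside a common $\PP^9_K$ is forced by the identification of their geometric points.
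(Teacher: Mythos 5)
Your argument is correct and follows essentially the same route as the paper: after invoking Lemma~\ref{lem: rankOfE} to get $e_i=3$, you identify Construction~\ref{construction1} geometrically with the blowdown composed with the $3$-uple embedding, i.e.\ with $\phi_{|3L|}=\pi_{X_1}$ (resp.\ $\phi_{|3(2L-E_1-E_2-E_3)|}=\pi_{X_2}$) from Remark~\ref{rmk: CornLinSystems}, and conclude the two Severi-Brauer surfaces coincide inside the same $\PP^9_K$. Your extra care in pinning down $L^{(3)}$ as the descent of $\calO(3L)$ and in phrasing the final step as equality of closed $K$-subschemes that agree over $\Kbar$ merely makes explicit what the paper leaves implicit.
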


\begin{proof}
	Consider the twisted line bundle $\calL_1$. By Lemma~\ref{lem: rankOfE}, we have $e_1 = 3$, so Construction~\ref{construction1} applied to $\calL_1$ geometrically gives the composition of the morphism induced by the linear system $|({\calL_1})_\kbar| = |L|$ with the $3$-uple embedding of $\PP^{2}=\PP(H^0(S_\Kbar,\calO(L))^\vee)$ to $\PP^9$, which is exactly the morphism $\phi_{|3L|} = \pi_{X_1}$ of Remark~\ref{rmk: CornLinSystems}. A similar analysis for $\calL_{2}$ recovers the morphism $\pi_{X_2}$. 
\end{proof}

\subsection{Proof of Proposition~\ref{prop: EquivAlgebras}}
Recall $S$ is the generic fiber of the good del Pezzo fibration $\pi \colon X' \to \PP^2$; it is a sextic del Pezzo surface over $\kk(\PP^2)$. Let $\calL_1$, $\calL_2$ be the twisted line bundles of \S\ref{ss: twistedlinebundlesondP6}. By Lemma~\ref{lem: KollarToCorn}, Construction~\ref{construction1} applied to $\calL_1$ and $\calL_2$ on  $S_K$ yields the nontrivial elements of $\Br \kk(Y)[3]$ from Corollary~\ref{cor: BrauerSubgroup}. On the other hand, Construction~\ref{construction2} applied to $\calL_1$ and $\calL_2$ on $S_K$ yields the nontrivial elements of the image of $\Gamma \subseteq\Br(\calY_k)[3]$ for the inclusion $\Br(Y) \hookrightarrow \Br \kk(Y)$. By~\cite{Kollar}, these two constructions give the same Brauer classes. \qed

\section{Local Invariant Computations}\label{sec:InvariantComputations}

Let $Y$ be a smooth projective geometrically integral variety over a number field $k$. For $B \subseteq \Br(Y)$, let
\[ 
	Y(\A_k)^B := \bigg\{ (P_v) \in Y(\A_k) : \sum_v \inv_v \alpha(P_v) = 0 \textup{ for all } \alpha \in B \bigg\},
\]
where $\inv_v\colon \Br k_v \to \Q/\Z$ is the invariant map from local class field theory. The quantity $\inv_v \alpha(P_v)$ can be nonzero only a finite number of places of $k$: the archimedean places, the places of bad reduction of $X$, and the places where the class $\alpha$ is ramified.  Global class field theory shows the set $Y(\A_k)^B$ satisfies
\[
Y(k) \subseteq Y(\A_k)^B\subseteq Y(\A_k).
\]
We say that $Y$ has a \defi{Brauer--Manin obstruction} to the Hasse principle if $Y(\A_k) \neq \emptyset$ and $Y(\A_k)^B = \emptyset$~\cite{Man71}.

When $Y$ is a K3 surface of degree $2$ and $B = \{\alpha\}$ consists of a nontrivial element in the group $\Gamma$ of Proposition~\ref{prop: EquivAlgebras}, we may use a result from \cite{HVA13} (which is an application of~\cite{CTS}) to show that local invariants are constant at certain finite places $v$ of bad reduction of $Y$, where the singular locus satisfies a technical hypothesis. After recalling the necessary result, we show that $\alpha$ can ramify only over places of bad reduction for $Y$. In the special case $k = \Q$, we give sufficient conditions for the  invariant of $\alpha$ to be nontrivial at all $v$-adic points of $Y$. Finally, we note that a point $P_\infty$ at an archimedean place has $\inv_\infty \alpha(P_\infty) = 0$ because local invariants at archimedean places have an image lying in $\frac{1}{2}\Z/\Z\subset \Q/\Z$, and $\alpha$ has order $3$.

\subsection{Places of bad reduction with mild singularities}

Let $k$ be a finite extension of $\Q_p$ with a fixed algebraic closure $\kbar$, let $\calO$ denote the ring of integers of $k$, and let $\F$ denote the residue field. 

\begin{prop}{ \cite{HVA13}*{Prop. 4.1, Lemma 4.2} } \label{prop: MildBadRed}
	Suppose that $p \ne 2$, and that $\ell \neq p$ is prime. Let $Y$ be a K3 surface over $k$, and let $\pi \colon \calY \to \Spec \calO$ be a flat proper morphism from a regular scheme with $Y = \calY \times_{\calO} k$. Assume that the singular locus of the closed fiber $\calY_0 := \calY_{\Fbar}$ consists of $r < 8$ points, each of which is an ordinary double point. If $Y(k) \ne \emptyset$, then for $\alpha \in \Br(Y)\{ \ell\}$, the image of the evaluation map $\ev_\alpha \colon Y(k) \to \Br(k)$ is constant.
	\qed
\end{prop}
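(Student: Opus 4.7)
The plan is to follow the strategy of \cite{HVA13}*{\S4}, which builds on local reciprocity techniques of Colliot-Th\'el\`ene and Saito \cite{CTS}. The goal is to extend $\alpha$ across the special fiber of a regular model in a controlled manner, and then show that at any two $k$-points the evaluations of $\alpha$ differ by a class in $\Br(\calO) = 0$.

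First I would resolve the singularities of $\calY$. Since each singular point is a $3$-fold ordinary double point and $p \neq 2$, blowing each one up yields a regular proper model $\tilde{\calY} \to \Spec \calO$ with exceptional divisors $Q_1, \dots, Q_r$, each a smooth quadric surface isomorphic to $\PP^1_\F \times \PP^1_\F$, meeting the proper transform of $\calY_0$ transversely. The generic fiber is unchanged, so $\alpha$ may still be regarded as a class on $\tilde{\calY}_k = Y$. Any $P \in Y(k)$ extends by the valuative criterion of properness to a unique section $\epsilon_P \colon \Spec \calO \to \tilde{\calY}$, and then $\ev_\alpha(P) = \epsilon_P^*\alpha$.

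The crux is to control the residues of $\alpha$ along the irreducible components of $\tilde{\calY}_0$. Using Gysin/purity for the Brauer group on a regular scheme, the cokernel of the restriction $\Br(\tilde\calY)\{\ell\} \to \Br(Y)\{\ell\}$ is identified with the group of residues in $\bigoplus_D \HH^1(D, \Q_\ell/\Z_\ell)$, where $D$ ranges over the irreducible components of $\tilde{\calY}_0$. The main obstacle, and the substance of \cite{HVA13}*{Lemma~4.2}, is to show that the bound $r < 8$ on the number of ODPs forces these $\ell$-primary residues, evaluated along any section of $\tilde\calY$ coming from a $k$-point, to coincide for all choices of section. Granted this, the difference $\ev_\alpha(P_1) - \ev_\alpha(P_2)$ lies in $\Br(\calO)\{\ell\} = 0$, giving constancy. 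I would defer the detailed bookkeeping underlying the numerical bound $r < 8$ to \cite{HVA13}*{\S4}, where it is proved by a careful analysis of the dual complex of $\tilde{\calY}_0$ together with the Hochschild--Serre spectral sequence.
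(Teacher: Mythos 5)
There is a genuine gap, and in fact your opening step misreads the hypotheses. The proposition assumes the total space $\calY$ is \emph{already regular}; the $r<8$ ordinary double points are $A_1$ singularities of the closed fiber $\calY_0$ (a surface), not threefold nodes of $\calY$. So there is nothing to resolve, and your resolution step is geometrically wrong in any case: blowing up a closed point of a regular threefold produces an exceptional $\PP^2$, not a smooth quadric $\PP^1\times\PP^1$ (quadric exceptional divisors arise when the \emph{total space} has an ordinary threefold double point, which is excluded here by regularity), and since the uniformizer vanishes to order $2$ at such a fiber-singular point, the new special fiber would contain that $\PP^2$ with multiplicity $2$, destroying the clean picture of reduced components meeting the strict transform transversely on which your residue bookkeeping relies. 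Also, purity on a regular scheme gives residue maps into $\HH^1(\kappa(D),\Q_\ell/\Z_\ell)$ for the components $D$ of the special fiber, with the quotient $\Br(Y)\{\ell\}/\Br(\calY)\{\ell\}$ merely injecting into their direct sum, not identified with it.

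More importantly, the actual content of the statement—why $r<8$ and $p\neq 2$ force constancy of $\ev_\alpha$—is exactly the step you defer, and your description of it ("residues evaluated along sections coincide", to be proved via the dual complex and Hochschild--Serre) is an assertion of the conclusion rather than an argument, and does not reflect what the cited results do. In \cite{HVA13} the mechanism is: by \cite{CTS}, for a regular proper flat model whose geometrically integral special fiber $Z$ satisfies $\HH^1_{\et}(Z_{\Fbar},\Q_\ell/\Z_\ell)=0$, the evaluation map of any $\ell$-primary class on the generic fiber is constant on $k$-points (which necessarily reduce into the smooth locus of $Z$ because $\calY$ is regular); Lemma~4.2 of \cite{HVA13} then verifies this vanishing for a K3 surface with fewer than $8$ ordinary double points in characteristic $\neq 2$: for odd $\ell$ because the local fundamental group of an $A_1$ point is $\Z/2$, so $\ell$-covers extend over the simply connected K3 resolution, and for $\ell=2$ because a nonempty even set of disjoint $(-2)$-curves on a K3 surface has cardinality $8$ or $16$, impossible when $r<8$. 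Note that the paper itself offers no proof beyond citing \cite{HVA13}*{Prop.~4.1, Lemma~4.2}; if you likewise defer the crux, your sketch should at least set up the reduction correctly, and as written it does not.
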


\begin{cor} \label{cor: ConstInvtsMildBadRed} 
With notation as in the beginning of \S\ref{sec:InvariantComputations}, if $v$ is a place of bad reduction for the K3 surface $Y$ for which the singular locus of the reduction consists of $r < 8$ points, each of which is an ordinary double point, then $\inv_v \alpha(P_v) = 0$ for all $P_v \in Y(k_v)$.
\end{cor}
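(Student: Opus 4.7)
The plan is to upgrade Proposition~\ref{prop: MildBadRed}---which only asserts constancy of the evaluation map---to the stronger conclusion that the constant value is zero. The statement is vacuous if $Y(k_v) = \emptyset$, so we may assume $Y(k_v) \neq \emptyset$; then Proposition~\ref{prop: MildBadRed} applied with $\ell = 3 \neq p$ shows that $\ev_\alpha\colon Y(k_v)\to \Br(k_v)$ has constant image. It thus suffices to exhibit one point $P_v^\circ \in Y(k_v)$ at which the invariant vanishes.

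To produce $P_v^\circ$, I would fix a regular proper $\calO_v$-model $\calY$ as in Proposition~\ref{prop: MildBadRed}, whose special fibre $\calY_\F$ has only $r<8$ ordinary double points. The smooth locus $\calY_\F^{\textup{sm}}$ is a dense open subscheme of an integral surface over $\F_v$, and for $|\F_v| \geq 3$ the Lang--Weil estimates ensure that $\calY_\F^{\textup{sm}}(\F_v)$ is nonempty. Any smooth $\F_v$-point lifts, via Hensel's lemma, to an $\calO_v$-point $\tilde P \in \calY(\calO_v)$, and its generic fibre provides the desired $P_v^\circ \in Y(k_v)$.

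At $P_v^\circ$ the invariant is forced to vanish by a standard purity argument. Cohomological purity for Brauer groups on regular schemes---applied to the prime-to-$p$ torsion class $\alpha$, using that the ODP singular points have codimension two in the regular total space $\calY$---allows us to extend $\alpha$ to a class $\tilde\alpha \in \Br(\calY^{\textup{sm}})$. Pulling back $\tilde\alpha$ along the section $\tilde P$ lands in $\Br(\calO_v) \cong \Br(\F_v) = 0$, so $\inv_v\alpha(P_v^\circ) = 0$. Combined with the constancy from Proposition~\ref{prop: MildBadRed}, this forces $\inv_v\alpha(P_v) = 0$ for every $P_v \in Y(k_v)$.

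The principal obstacle is the purity step, justifying that $\alpha$---a priori only defined on $Y$---admits the required extension across the codimension-two singular locus of $\calY$. This is a standard but nontrivial application of absolute cohomological purity (Gabber's theorem) for the prime-to-$p$ part of the Brauer group; it is the only non-formal ingredient in the argument, after which the vanishing of $\Br(\calO_v)$ for a henselian local ring with finite residue field concludes the proof.
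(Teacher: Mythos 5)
Your overall strategy (constancy from Proposition~\ref{prop: MildBadRed} plus evaluation at one well-chosen point) is reasonable in outline, but the step that produces the point with vanishing invariant has a genuine gap, and it is exactly the non-formal content. The class $\alpha$ is defined on the generic fibre $Y=\calY\times_{\calO_v}k_v$, whose complement in the model $\calY$ is the \emph{special fibre}, a divisor of codimension one. Absolute purity only lets you extend a Brauer class across a closed subset of codimension at least two \emph{inside a scheme on which the class is already defined}; it says nothing about extending $\alpha$ from the generic fibre to $\Br(\calY^{\textup{sm}})$. The obstruction to that extension is the residue of $\alpha$ along the special fibre, and at a place of bad reduction there is no reason for this residue to vanish --- indeed it can be nonzero even for good reduction: any nonconstant element of $\Br(k_v)$ pulled back to $Y$ is ``ramified'' along the special fibre and has constant \emph{nonzero} evaluation. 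This is precisely why Proposition~\ref{prop: MildBadRed} (and its source) asserts only constancy of $\ev_\alpha$, and why, in the general situation recalled in the introduction, one must actually compute the invariant at one point rather than deduce it is zero. The fact that the ordinary double points have codimension two in $\calY$ is irrelevant to this issue. (Note also that the unramifiedness argument the paper does have, Lemma~\ref{lem: PlacesOfRamification}, requires spreading out the whole du Val family over $\calO_v$ with $\calY$ smooth, which is exactly what is unavailable at a place of bad reduction.)

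The paper closes this gap by a different, and essential, idea: the covering involution $\iota\colon Y\to Y$ of the degree-$2$ K3 satisfies $\iota^*\alpha=\alpha^{-1}$ (\S\ref{sec:duVal}), so for any $P_v\in Y(k_v)$ one has $-\inv_v\alpha(P_v)=\inv_v\alpha(\iota(P_v))$. Combined with the constancy of $\ev_\alpha$ from Proposition~\ref{prop: MildBadRed}, this gives $c=-c$ for the constant value $c$, and since $c$ lies in $\tfrac{1}{3}\Z/\Z$ this forces $c=0$. If you want to salvage your approach, you would have to prove that $\alpha$ has trivial residue along the special fibre at these bad places, which is not known here and is not what the cited results provide; the involution argument sidesteps that question entirely.
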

\begin{proof} Suppose that $P_v \in Y(k_v)$ and let $P_v' \in Y(k_v)$ be its image under the covering involution $\iota\colon Y \to Y$. Then,
\[ - \inv_v \alpha(P_v) = \inv_v( \alpha^{-1}) (P_v)  = \inv_v (\iota^\ast \alpha) (P_v) =  \inv_v \alpha( \iota(P_v) ) = \inv_v \alpha(P_v'), \]
where $\iota(\alpha) = \alpha^{-1}$ by the arguments of \S\ref{sec:duVal}.
Since the image of the map $\ev_{\alpha}$ is constant by Proposition ~\ref{prop: MildBadRed}, this forces $\inv_v \alpha(P_v) = 0$ for all $P_v \in Y(k_v)$.
\end{proof} 

\subsection{No ramification at places of good reduction}
\label{ss:goodred}

Let $v$ be a finite place of a number field $k$.  Denote by $k_v$ the corresponding completion, and write $\calO_v$ and $\F_v$ for the associated ring of integers and residue field, respectively. Let $\pi\colon\scrX' \to \PP^2_{\calO_v}$ be a flat projective morphism such that $\pi_{k_v}\colon \calX'_{k_v} \to \PP^2_{k_v}$ and $\pi_{\F_v}\colon \calX'_{\F_v} \to \PP^2_{\F_v}$ are sextic du Val del Pezzo families.  Assume further that in the Stein factorization
\[
\calH := \Hilb_{3n+1}(\calX'/\PP^2_{\calO_v}) \to \calY \to \PP^2_{\calO_v}
\]
the surfaces $\calY_{k_v}$ and $\calY_{\F_v}$ are smooth, and hence are K3 surfaces, by the analysis in~\cite[Proposition~18]{AHTVA}. The proof of Theorem~\ref{thm: Kuznetsov} shows that the map $\calH_{k_v} \to \calY_{k_v}$ is an \'etale $\PP^2$-bundle, and hence gives rise to a class $\alpha_{k_v} \in \Br(\calY_{k_v})$.

\begin{lemma} \label{lem: PlacesOfRamification}
	With notation as above, for each $P_v \in \calY(k_v)$, we have $\inv_v \alpha(P_v) = 0$.
\end{lemma}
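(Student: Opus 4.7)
The plan is to extend the Brauer class $\alpha_{k_v}$ to a class on the integral model $\calY$, use the valuative criterion of properness to extend $P_v$ to an $\calO_v$-section, pull back the class to $\Br(\calO_v)$, and conclude by vanishing of $\Br(\calO_v)$ for a Henselian local ring with finite residue field.

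First I would observe that Theorem~\ref{thm: Kuznetsov} furnishes more than a class on the generic fiber: the Stein factorization $\calH \to \calY \to \PP^2_{\calO_v}$ presents $\calH \to \calY$ as an \'etale-locally trivial $\PP^2$-bundle over the \emph{integral} scheme $\calY$, and so yields a Brauer class $\alpha \in \Br(\calY)$ of order $3$ whose restriction to the generic fiber is precisely $\alpha_{k_v}$. Next, since $\calY \to \PP^2_{\calO_v}$ is finite (being the first factor of a Stein factorization) and $\PP^2_{\calO_v}$ is projective over $\calO_v$, the structure morphism $\calY \to \Spec \calO_v$ is projective, hence proper. By the valuative criterion of properness applied to the DVR $\calO_v$, the $k_v$-point $P_v$ extends uniquely to a section $\tilde P_v : \Spec \calO_v \to \calY$. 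Functoriality of the Brauer group gives $\tilde P_v^* \alpha \in \Br(\calO_v)$, and by construction this class restricts along $\Spec k_v \to \Spec \calO_v$ to the pullback $P_v^* \alpha_{k_v} \in \Br(k_v)$ whose invariant we wish to compute.

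The final step is to invoke $\Br(\calO_v) = 0$: since $\calO_v$ is a Henselian local ring with finite residue field $\F_v$, Hensel lifting for Azumaya algebras gives $\Br(\calO_v) \cong \Br(\F_v)$, which vanishes by Wedderburn's theorem. Therefore $\tilde P_v^* \alpha = 0$, whence $P_v^* \alpha_{k_v} = 0$ and $\inv_v \alpha(P_v) = 0$. The only conceptual obstacle is the first step — that the Severi-Brauer bundle exists over the whole of $\calY$, not merely fiberwise — but this is exactly what Theorem~\ref{thm: Kuznetsov} supplies, so the argument reduces to the standard extension-by-properness mechanism. Everything else is routine.
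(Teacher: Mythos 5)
Your argument is exactly the paper's: Theorem~\ref{thm: Kuznetsov} spreads $\alpha_{k_v}$ out to a class $\alpha \in \Br(\calY)$, properness gives $\calY(\calO_v) = \calY(k_v)$ so $P_v$ extends to an $\calO_v$-section, and the evaluation lands in $\Br(\calO_v) = 0$. Your write-up just makes explicit the properness and $\Br(\calO_v)=0$ justifications that the paper leaves implicit; the proof is correct.
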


\begin{proof}
	Theorem~\ref{thm: Kuznetsov} implies that the element $\alpha_{k_v} \in \Br(\calY_{k_v})$ spreads out to a class $\alpha \in \Br(\calY)$. Since $\calY(O_v) = \calY(k_v)$, it follows that $\alpha(P_v) \in \Br(\calO_v) = 0$.
\end{proof}

\subsection{Invariants at the remaining places of bad reduction} 

In this section we use the notation of Corollary~\ref{cor: BrauerSubgroup} and \S\ref{subsec: KeyObservation}, specializing to the case $k = \Q$. The following lemma gives a sufficient condition to guarantee that the local invariants of $\alpha$ at $v$-adic points of $Y$ are always non-trivial, where $v$ is a place of bad reduction not satisfying the hypotheses of Proposition ~\ref{prop: MildBadRed}.
  
\begin{lemma} \label{lem: InsolubleCubic} 
	If $X'(\Q_v) = \emptyset$, then $\inv_v \alpha(P) \ne 0$ for all $P \in Y(\Q_v)$.
\end{lemma}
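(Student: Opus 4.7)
The plan is to argue by contradiction: suppose $\inv_v \alpha(P) = 0$ for some $P \in Y(\Q_v)$, and use a Lang--Nishimura-type lift to produce a $\Q_v$-point of $X'$, contradicting the hypothesis.

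By Theorem~\ref{thm: Kuznetsov} and the identification in \S\ref{subsec: KeyObservation}, $\alpha$ is the Brauer class of the Severi--Brauer surface bundle $\calH \to \calY$, where $\calH = \Hilb_{3n+1}(X'/\PP^2)$. Pulling back along $P \colon \Spec \Q_v \to \calY$, the fiber $Z_P := \calH \times_{\calY} \{P\}$ is a Severi--Brauer surface over $\Q_v$ with class $\alpha(P) \in \Br(\Q_v)$. The assumption $\inv_v \alpha(P) = 0$ forces $Z_P \cong \PP^2_{\Q_v}$, so $Z_P(\Q_v)$ is Zariski-dense in $Z_P$. Each such $\Q_v$-point corresponds, via $Z_P \hookrightarrow \calH$, to a $\Q_v$-rational closed subscheme of Hilbert polynomial $3n+1$ in the fiber $X'_{\rho(P)}$, where $\rho \colon Y \to \PP^2$ denotes the double cover.

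The main step is to select this $\Q_v$-point so that the corresponding subscheme is a smooth twisted cubic. The general member of the $\PP^2$-family of such subschemes on a sextic du Val del Pezzo is a smooth rational normal cubic avoiding the (isolated) du Val singularities, so the locus $U \subset Z_P$ parametrizing such smooth members is open and nonempty. By Zariski density of $Z_P(\Q_v)$ in $Z_P \cong \PP^2_{\Q_v}$, I can pick $q \in U(\Q_v)$, yielding a smooth twisted cubic $C_0 \subset X'_{\rho(P)} \subset \PP^5_{\Q_v}$ defined over $\Q_v$. Then $C_0$ is a smooth projective genus-$0$ curve over $\Q_v$ admitting a hyperplane divisor of odd degree $3$, so it cannot be a nontrivial Severi--Brauer conic (all of whose divisor classes have even degree); hence $C_0 \cong \PP^1_{\Q_v}$ and $C_0(\Q_v) \ne \emptyset$. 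Since $C_0 \subset X'_{\rho(P)} \subset X'$, any such point furnishes a $\Q_v$-point of $X'$, the desired contradiction.

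I expect the main obstacle to be verifying that the locus $U \subset Z_P$ is genuinely open and nonempty even when the fiber $X'_{\rho(P)}$ is a singular sextic du Val del Pezzo rather than smooth; one must check that the $\PP^2$-parametrization of twisted cubics on such a degenerate del Pezzo still contains a smooth, singularity-avoiding member generically. If a direct verification proves delicate, the argument can be repackaged using Lang--Nishimura applied to the projection $\calC_P \to X'$, where $\calC_P \subset Z_P \times_{\PP^2} X'$ is the universal incidence variety: the same analysis of generic $\PP^1$-fibers and dense $\Q_v$-points of $Z_P$ supplies the requisite smooth $\Q_v$-point of $\calC_P$, and Lang--Nishimura then delivers $X'(\Q_v) \ne \emptyset$.
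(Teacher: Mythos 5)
Your overall mechanism is the right one (triviality of $\inv_v\alpha(P)$ forces the Severi--Brauer fiber $\calH_P$ of $\calH\to Y$ to have a dense set of $\Q_v$-points, and a suitable such point should produce a $\Q_v$-point of $X'$), and on points $P$ lying over the complement of the discriminant curve your argument is complete and even a bit slicker than the paper's: over a smooth sextic del Pezzo fiber the general member of the $\PP^2$-family is a smooth twisted cubic, and your observation that a smooth genus-$0$ curve carrying a degree-$3$ divisor is $\PP^1_{\Q_v}$ correctly yields a $\Q_v$-point of $X'$ without even invoking Lang--Nishimura.

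The genuine gap is the step you yourself flag and never close: when $P\in Y(\Q_v)$ maps into $B_I\cup B_{II}$ (see Theorem~\ref{thm: AHTVA}(3)), the fiber $X'_{\rho(P)}$ is a singular du Val sextic del Pezzo, and your proof needs that the generic point of $Z_P$ still parametrizes a smooth twisted cubic (or at least a subscheme with a $\Q_v$-point). This requires an analysis of $\Hilb_{3n+1}$ of singular du Val sextic del Pezzos (note that ``avoiding the singularities'' is not always achievable: for some degenerations every curve in the relevant class passes through a singular point, though the curve itself may remain smooth), and your fallback via the incidence variety $\calC_P$ has exactly the same unverified input, plus the need for a \emph{smooth} $\Q_v$-point to apply Lang--Nishimura. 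The paper sidesteps all of this with a simple reduction you are missing: the evaluation map $\ev_\alpha\colon Y(\Q_v)\to\Br(\Q_v)$ is locally constant for the $v$-adic topology, so it suffices to verify the claim on the $v$-adically dense open set of points $P$ whose image avoids $B_I\cup B_{II}$; there the fiber is a smooth del Pezzo surface, and (in the paper's version) blowing down one of the skew triples in~\eqref{eq:GaloisInvariantSets}, as in Proposition~\ref{prop: EquivAlgebras}, exhibits a degree-$9$ del Pezzo surface birational to $\calH_P$, so Lang--Nishimura transfers the emptiness of $X'(\Q_v)$ to $\calH_P(\Q_v)=\emptyset$. Adding this local-constancy reduction (or, alternatively, carrying out the du Val case analysis you postpone) is needed to make your argument a complete proof of the lemma.
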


\begin{proof}
	Since the evaluation map $\ev_\alpha$ is locally constant, and the invariant map $\inv_v$ is an isomorphism onto its image, it suffices to prove that the invariant takes on the desired values in a $v$-adically dense open subset of $Y(\Q_v)$. Thus, let $P \in Y(\Q_v)$, let $Q$ be its image in $\PP^2(\Q_v)$, and suppose that $Q \not \in B_I \cup B_{II}$ (see~Theorem~\ref{thm: AHTVA}(3)), so that the corresponding fiber of the good del Pezzo fibration $\pi \colon X' \to \PP^2$ is a smooth sextic del Pezzo surface, denoted $X'_{Q}$.
	
	By construction, the fiber $\calH_P$ of the map $\calH \to Y$ over $P$ is the Severi-Brauer variety $\text{SB}(\alpha(P))$, and thus $\inv_v \alpha(P)$ is nontrivial if and only if $\calH_P(\Q_v)$ is empty. The proof of Proposition~\ref{prop: EquivAlgebras} applied to the smooth del Pezzo fiber $X'_{Q}$ of $\pi$ over $\Q_3$ shows that blowing down one of the triples in~\eqref{eq:GaloisInvariantSets} on $X'_{Q}$ gives a del Pezzo surface of degree $9$ birational to the fiber $\calH_P$.  Hence, the Lang--Nishimura lemma says that $\calH_P(\Q_v)$ is empty if and only if $X'_{Q}(\Q_v)$ is empty. We conclude that a sufficient condition to guarantee $X'_{Q}(\Q_v) = \emptyset$, and hence that $\calH_{P}(\Q_v) = \emptyset$, for each $P \in Y(\Q_v)$ is to simply have $X'(\Q_v)$ be empty.
\end{proof}

\section{Counterexample to the Hasse Principle}\label{sec:HPCounterexample}
Let $\PP^5 := \Proj \Q [x_0,x_1,x_2,x_3,x_4,x_5]$, and define quadrics cut out by the zero locus of
\begin{align*}
 Q_1 :=& -x_0x_3 + x_2x_3 - x_0x_4 + x_1x_4 + 3x_2x_4 + 5x_0x_5 - x_1x_5, \\
 Q_2 :=& -x_1x_3 + 5x_0x_4 - 2x_2x_4 - 2x_0x_5 + 5x_1x_5 + x_2x_5, \\
 Q_3 :=& -2x_2x_3 - x_0x_4 - 2x_1x_4 - 2x_2x_4 + x_1x_5.
\end{align*}
Each one of these quadrics contains the planes 
\[
	\Pi_1 := \{ x_0 = x_1 = x_2 = 0 \} \hspace{1 em } \textup{ and } \hspace{1 em } \Pi_2 := \{ x_3 = x_4 = x_5 = 0 \}.
\]
We obtain a sextic elliptic ruled surface $T$ by saturating the ideal $\langle Q_1,Q_2,Q_3\rangle$ with respect to the product ideal $I(\Pi_1)I(\Pi_2)$. The surface $T$ is cut out by the vanishing of $Q_1, Q_2, Q_3$ and the two cubics
\begin{align*}
	C_1 :=&2x_3^3 + 5x_3^2x_4 + x_3x_4^2 + 14x_4^3 - 20x_3^2x_5 - 26x_3x_4x_5 - 11x_4^2x_5 + 47x_3x_5^2 + 30x_4x_5^2 + 5x_5^3, \\
    C_2 :=&2x_0^3 - x_0^2x_1 - 2x_0x_1^2 - x_1^3 + 47x_0^2x_2 + 10x_0x_1x_2 + x_1^2x_2 - 11x_0x_2^2 - 18x_1x_2^2 - 4x_2^3.
\end{align*}

Now comes the most delicate point of the construction: we pick a cubic 
\[
	C \in I(T) = \langle Q_1,Q_2,Q_3,C_1,C_2\rangle
\]
whose vanishing locus $X$ is a $3$-adically insoluble smooth cubic fourfold. Note that each of the cubic curves $\{C_1 = 0\}$ and $\{C_2 = 0\}$ depends only three variables.  They both lack $(\Z/3\Z)$-points, \emph{when considered as plane cubic curves}. Hence, for any choice of integers $(x_0,x_1,x_2) \neq (0,0,0)$ and $(x_3,x_4,x_5) \neq (0,0,0)$, we have
\[
C_1(x_3,x_4,x_5) \not\equiv 0 \bmod 3\quad\text{and}\quad C_2(x_0,x_1,x_2) \not\equiv 0 \bmod 3.
\]
As a consequence, the cubic 
\[
	\{3C_1 + C_2 = 0\}
\]
fails to have points over $\Z/9\Z$. However, it is not smooth over $\Q$. We modify this cubic by adding a suitable nonic multiple of a cubic in $\langle Q_1,Q_2,Q_3\rangle$; this way we can guarantee that the resulting fourfold $X$ is smooth, and that its associated (twisted) K3 surface enjoys a host of other properties we require, such as good reduction modulo $2$.  To wit, we take
\begin{align*} 
	C :=&\ 9\left[(x_0 + x_1 + x_2 + x_5)Q_1 + (x_1 + x_3)Q_2 + (x_0 + x_1 + x_4 + x_5)Q_3\right] + 3C_1 + C_2\\
	=&\ 2x_0^3 - x_0^2x_1 - 2x_0x_1^2 - x_1^3 + 47x_0^2x_2 + 10x_0x_1x_2 + x_1^2x_2 - 11x_0x_2^2 - 18x_1x_2^2 - 4x_2^3 \\
	&\quad + 18x_0^2x_3 + 18x_0x_1x_3 + 9x_1^2x_3 + 18x_0x_2x_3 + 18x_1x_2x_3 + 18x_2^2x_3 + 9x_1x_3^2 + 6x_3^3  \\
	&\quad + 36x_0^2x_4 + 9x_0x_1x_4 + 18x_1^2x_4 - 9x_0x_2x_4 + 18x_1x_2x_4 + 18x_2^2x_4 - 27x_0x_3x_4 \\
	&\quad + 18x_2x_3x_4 + 15x_3^2x_4 + 27x_0x_4^2 - 36x_2x_4^2 + 3x_3x_4^2 + 42x_4^3 - 90x_0^2x_5 - 72x_0x_1x_5 \\
	&\quad - 45x_1^2x_5 - 18x_1x_2x_5 + 36x_0x_3x_5 - 45x_1x_3x_5 + 9x_2x_3x_5 - 60x_3^2x_5 - 54x_0x_4x_5 \\
	&\quad + 27x_1x_4x_5- 18x_2x_4x_5 - 78x_3x_4x_5 - 33x_4^2x_5 - 90x_0x_5^2 + 141x_3x_5^2 + 90x_4x_5^2 + 15x_5^3.
\end{align*}
The above discussion shows that $X(\Z/9\Z) = \emptyset$, and hence $X(\Q_3) = \emptyset$.

The discriminant locus of the map $\pi \colon X'= \Bl_T(X) \to \PP^2 = | \calI_T(2)^\vee| $ is a reducible curve of degree 12 with two irreducible components, $B_I$ and $B_{II}$, with $B_I$ the smooth plane sextic over $\Q$ cut out by 
\begin{align*}
	f &:= 17279788x^6 + 21966980x^5y + 5209685x^4y^2 - 10091766x^3y^3 - 9449085x^2y^4 \\
	&- 3512294xy^5 - 510755y^6 + 81563000x^5z + 46799342x^4yz - 48304566x^3y^2z \\
	&- 68669390x^2y^3z - 29936552xy^4z - 4960696y^5z + 132675265x^4z^2 - 24537700x^3yz^2 \\
	&- 153420566x^2y^2z^2 - 94604246xy^3z^2 - 18001746y^4z^2 + 88262884x^3z^3 - 116707356x^2yz^3 \\
	&- 139178230xy^2z^3 - 36604266y^3z^3 + 12231034x^2z^4 - 90599148xyz^4 - 40695955y^2z^4 \\
	&- 11073000xz^5 - 22207274yz^5 - 3652475z^6,
\end{align*}
and a double cover $Y \to \PP^2$ ramified along $B_I$ given by 
\[
	\delta w^2 = f(x,y,z)
\]
for some $\delta \in \Q^\times$, is a K3 surface of degree 2.  To determine the value of $\delta$, it suffices to look at a single smooth fiber $S_{[x_0,y_0,z_0]} := \pi^{-1}([x_0,y_0,z_0])$ of $\pi\colon X' \to \PP^2$. Indeed, by Proposition \ref{prop: CornDelPezzoData}, $\delta$ is determined, up to squares, by the property that over the extension $\Q(\sqrt{f(x_0,y_0,z_0)/\delta})$ each of the skew triples of $(-1)$-curves in $S_{[x_0,y_0,z_0]}$ is defined.  This way we check that, in our case, we can take $\delta = 1$.

\subsection{Primes of bad reduction} \label{subsec: PrimesOfBadRed} The Jacobian criterion shows that, with the possible exception of $p = 2$, the primes of bad reduction of the model we have for the K3 surface $Y$ coincide with the primes of bad reduction of the plane sextic $f = 0$.  The primes of bad reduction of this plane sextic must divide the generator $m$ of the ideal obtained by saturating
\[
\left\langle f, \frac{\partial f}{\partial x}, \frac{\partial f}{\partial y}, \frac{\partial f}{\partial z} \right\rangle \subset \Z[x,y,z]
\]
by the irrelevant ideal, and computing its third elimination ideal.  A Gr\"obner basis calculation over $\Z$ shows that
\begin{align*}
	m = & \,\, 5183655491723801700167749249923539759212595989317043450941205133922477 \\
	& \,\,4983736928297221979398638986972977399830115294629898991772309007577733 \\
	& \,\,9340845846248825313938176384776583116617867319433833462119034621956835 \\
	& \,\,0164014449486497652543584687852235647461802798371746334045253660849102 \\
	& \,\,6619155248743682227444947206215266448570301668208072897742377424333883 \\
	& \,\,22718119551330227149710084257339986806689874438360.
\end{align*}
Standard factorization methods yield a few small prime power factors of $m$:
\[
	m = 2^3 \cdot 3^{10} \cdot 5 \cdot 29^2 \cdot 2851^2 \cdot 1647622003^2 \cdot m';
\]
the remaining factor $m'$ has 366 decimal digits. Factorization of such a large integer is a computationally difficult problem. However, it is reasonable to expect that the integer encoding the primes of bad reduction for the cubic fourfold $X$ has a large greatest common divisor with $m$. Indeed, by an analogous Gr\"obner basis calculation over $\Z$, we find that the primes of bad reduction of $X$ must divide 
\begin{align*}
	n := & \,2785635380281567358627612747160273008954813709377362118360363769288 \\
	&\,3266943448924557748494585237466118988173996980092848560234291977128 \\
	&\,5136691999562751842259916853009979290324458976115151429311777291929 \\
	&\,4109180259515866460532106060165319658125293838250844956494005775
\end{align*}

The greatest common divisor is computed instantaneously via the Euclidean algorithm:
\begin{align*} 
	\gcd(m,n) := &\,\, 10916958733847554724999087148257562876963755787681804271807512 \\
	&\,\,39742865771648776681654603351155791354915762094311121777611445 \\
	&\,\,310537427891011052531888686655803840674098627355.
\end{align*}
This is still a 172 digit integer! However, modern computer technology is in fact able to factor both the $\gcd(m,n)$ and $m/\gcd(m,n)$ using elliptic curve factorization.  We find that, with the possible exception of $p = 2$,  the primes of bad reduction for $Y$ are:
\begin{align*}
	& 5, 29, 2851, 1647622003, 8990396491695741359, 381640024919828593698301, \\
	&23298439293572123109021711335092905690126256356826741414312843163784586626801847, \\
	&70630573062884782978729488744706657246824151776978742375050861454515493652288934 \\
	&35340410321256513135415547594556084340880768251657255814972524891.
\end{align*}
Finally, we show that the K3 surface $Y$ has good reduction at $p = 2$, by making a suitable change of variables over $\Q$.  Reducing the sextic $f(x,y,z)$ we obtain a perfect square:
\[
(x^2y + x^2z + xy^2 + y^3 + yz^2 + z^3)^2.
\]
Applying the $\Q$-transformation $w\mapsto 2w  + \left(x^2y + x^2z + xy^2 + y^3 + yz^2 + z^3\right)$ to $Y$, we obtain a new model for $Y$, all of whose coefficients are divisible by $4$.  Dividing out by this factor we obtain a new model for $Y$, whose reduction modulo $2$ is
\begin{align*}
w^2 &+ (x^2y + x^2z + xy^2 + y^3 + yz^2 + z^3)w \\
&+ x^6 + x^5y + x^4y^2 + x^4yz + x^3yz^2 + x^3z^3 + xyz^4 + y^6 + y^4z^2 + y^3z^3 + y^2z^4 + yz^5 + z^6 = 0.
\end{align*}
An application of the Jacobian criterion shows that this surface is quasi-smooth over $\F_2$, and since the surface is a Cartier divisor in the weighted projective space $\Proj \F_2[x,y,z,w]$, where $x$, $y$, $z$, and $w$ have respective weights $1$, $1$, $1$, and $3$, it follows that the surface is smooth.  Hence $Y$ has good reduction at $p = 2$.

\subsection{Local points} \label{subsec: LocalPoints}

By the Weil conjectures, if $p > 22$ is a prime such that $Y$ has smooth reduction $Y_p$ at $p$, then $Y_p$ has a smooth $\F_p$-point that can be lifted to a smooth $\Q_p$-point, by Hensel's lemma. Thus, to show that $Y$ is everywhere locally soluble, it suffices to verify solubility of $Y$ at $\R$ (clear), at $\Q_p$ for primes $p \le 19$, and those primes $p > 19$ for which $Y$ has bad reduction. The results are recorded in \hyperref[fig1invts]{Table 1}.

\begin{table}[!htb] \small \label{fig1invts}
\caption{Verification that $Y$ has $\Q_p$ points at small $p$ and primes of bad reduction}
\begin{align*}
\begin{tabular}{|r|r|r|r|r|}
\hline
\hspace{.2 em }$p$ \hspace{.2 em} & \hspace{.2 em} $x$  \hspace{.2 em} & $y$ & $z$ & $w$ \\
\hline \hline
2 & $-1$ & 1 & 0 & 20892553 \\
\hline
3 & $-1$ & $-1$ & $-1$ & $-520729088$ \\
\hline
5 & $-1$ & 0 & $-1$ & 317286496 \\
\hline
7 & $-1$ & $-1$ & $-1$ & $-520729088$ \\
\hline
11 &  $-1$ & $-1$ & 1 & 1303120 \\
\hline
13 & $-1$ & $-1$ & $-1$ & $-520729088$ \\
\hline
17 & $-1$ & $-1$ & 1 & 1303120 \\
\hline
19 & $-1$ & $-1$ & $-1$ & $-520729088$ \\
\hline
29 & $-1$ & $-1$ & $-1$ & $-520729088$ \\
\hline
2851 & $-1$ & 0 & $-1$ & 317286496 \\
\hline
1647622003 & $-1$ & $-1$ & $-1$ & $-520729088$ \\
\hline
8990396491695741359 & $-1$ & $-1$ & $-1$ & $-520729088$ \\
\hline
381640024919828593698301 & $-1$ & $-1$ & $-1$ & $-520729088$ \\
\hline
232984392935721231090217113 & {} & {} & {} & {} \\
350929056901262563568267414  & $-1$ & $-1$ & 0 & 20892553 \\
14312843163784586626801847 & {} & {} & {} & {} \\
\hline
706305730628847829787294887 & {} & {} & {} & {} \\
447066572468241517769787423 & {} & {} & {} & {} \\
750508614545154936522889343 & $-1$ & $-1$ & $-1$ & $-520729088$ \\
534041032125651313541554759 & {} & {} & {} & {} \\
455608434088076825165725581 & {} & {} & {} & {} \\
4972524891 & {} & {} & {} & {} \\
\hline
\end{tabular} & \hspace{3 em}
\end{align*}
\end{table}

\subsection{Picard Rank 1}

In this section we show that $Y$ has geometric Picard rank 1. This will guarantee that the obstruction to the Hasse principle arising from $\alpha$ is genuinely transcendental. We follow the strategy laid out in~\cite{HVA13}*{\S5.3} (which crucially uses~\cite{EJ11}), with one significant update: since the publication of~\cite{HVA13}, Elsenhans has implemented $p$-adic cohomology point counting methods for K3 surfaces of degree $2$ in {\tt Magma}~\cite{MAGMA}.  In particular, this allows us to do point counts in characteristic $13$, which are infeasible with more naive algorithms.  This is particularly noteworthy, because the K3 surface $Y$ has bad reduction at $3$ and $5$, so point counts in these characteristics need not yield information on the geometric Picard rank of $Y$.

\begin{prop}\label{prop: K3hasPicRk1} The surface $Y$ has geometric Picard rank 1.  Consequently, $\Br_1(Y)/\Br_0(Y)$ is trivial.
\end{prop}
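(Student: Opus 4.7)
The plan is to follow the van Luijk--Elsenhans--Jahnel strategy used in \cite{HVA13}*{\S5.3}. For any prime $p$ of good reduction, the specialization map $\Pic(\Ybar) \hookrightarrow \Pic(Y_{\Fbar_p})$ is injective and respects the intersection form, and the rank of $\Pic(Y_{\Fbar_p})$ is bounded above by the number of eigenvalues (with multiplicity) of geometric Frobenius on $\HH^2_{\et}(Y_{\Fbar_p}, \Q_\ell)$ whose quotient by $p$ is a root of unity. So to upper-bound $\rank \Pic(\Ybar)$ it suffices to compute the characteristic polynomial of Frobenius at a suitable prime of good reduction, and then to refine the bound by keeping track of the discriminant of the specialized lattice.

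I would select two primes $p_1$ and $p_2$ of good reduction; by~\S\ref{subsec: PrimesOfBadRed} the small primes $2, 7, 11, 13$ are all admissible, and natural choices are $p_1 = 11$ and $p_2 = 13$. For each $p_i$, I would compute $\#Y(\F_{p_i^n})$ for enough values of $n$ to pin down the characteristic polynomial via the Lefschetz trace formula $\#Y(\F_{p_i^n}) = 1 + p_i^{2n} + \sum_j \alpha_j^n$, then factor out the obvious Frobenius eigenvalue $p_i$ coming from the polarization. The main computational obstacle, and the reason this proposition is delicate, is the point count on a degree $2$ K3 surface in characteristic $13$: the naive strategy is infeasible, and the whole argument hinges on Elsenhans's {\tt Magma} implementation of $p$-adic cohomology methods for degree $2$ K3 surfaces, as flagged in the statement.

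If each characteristic polynomial exhibits exactly two roots of the form $p_i\zeta$ with $\zeta$ a root of unity, then $\rank \Pic(Y_{\Fbar_{p_i}}) \leq 2$, which alone is not enough. Here the refinement of Elsenhans--Jahnel~\cite{EJ11} takes over: the Artin--Tate formula recovers the discriminant of $\Pic(Y_{\Fbar_{p_i}})$ in $\Q^\times/(\Q^\times)^2$ directly from the characteristic polynomial of Frobenius. If $\rank \Pic(\Ybar)$ were $2$, then $\Pic(\Ybar)$ would inject as a finite-index sublattice into each $\Pic(Y_{\Fbar_{p_i}})$ of the same rank, and index contributions would cancel, so the two discriminants would agree modulo squares. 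The plan is therefore to verify computationally that the discriminants at $p_1 = 11$ and $p_2 = 13$ represent distinct classes in $\Q^\times/(\Q^\times)^2$, forcing $\rank \Pic(\Ybar) = 1$; since the polarization has self-intersection $2$, equality holds and $\Pic(\Ybar) \isom \Z$ with trivial Galois action.

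The second assertion is then formal. The Hochschild--Serre spectral sequence produces an injection $\Br_1(Y)/\Br_0(Y) \hookrightarrow \HH^1(\Gal(\Qbar/\Q),\Pic(\Ybar))$; since the generator of $\Pic(\Ybar)\isom\Z$ is the Galois-fixed ample polarization, the action is trivial and $\HH^1(\Gal(\Qbar/\Q),\Z) = \Hom_{\textup{cont}}(\Gal(\Qbar/\Q),\Z) = 0$, the last equality holding because $\Gal(\Qbar/\Q)$ is profinite and $\Z$ has no nontrivial finite subgroup.
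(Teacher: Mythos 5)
Your plan is the classical van Luijk two-prime strategy, and it is sound in principle, but it is a genuinely different route from the one the paper takes. You propose to compute the Frobenius characteristic polynomial at \emph{two} good primes (you suggest $11$ and $13$), extract a rank bound of $2$ at each, and then separate the two reductions by comparing discriminants modulo squares via the Artin--Tate formula, concluding rank $1$ if the square classes differ. The paper instead uses the Elsenhans--Jahnel refinement as packaged in \cite{HVA13}*{Proposition 5.3}: it computes the Weil polynomial at the single prime $13$ (via {\tt WeilPolynomialOfDegree2K3Surface}), where \cite{EJ08}*{Algorithm 8} exhibits a line tritangent to the branch sextic whose pullback components generate a rank-$2$ sublattice of $\Pic(\Ybar_{13})$, and then checks that the branch sextic modulo $7$ admits \emph{no} tritangent line; the incompatibility of these two facts with $\rk\Pic(\Ybar)=2$ gives the conclusion. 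The trade-off: your route requires two expensive $p$-adic cohomology computations and succeeds only if the two Artin--Tate discriminants happen to land in distinct classes of $\Q^\times/(\Q^\times)^2$ --- a contingency you cannot guarantee in advance and have not verified for this particular $Y$ (if the classes coincide, the method is inconclusive and you must try other primes or fall back on the explicit-divisor refinement) --- whereas the paper's route needs only one Weil polynomial plus a cheap tritangent-line search at a second prime, and replaces the Artin--Tate step by explicit effective divisors at $13$. Your deduction of $\Br_1(Y)/\Br_0(Y)=0$ from $\Pic(\Ybar)\isom\Z$ via Hochschild--Serre and $\Hom_{\textup{cont}}(\Gal(\Qbar/\Q),\Z)=0$ matches the paper's argument.
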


\begin{proof}
	Briefly, if $p$ is a prime of good reduction for $Y$, then $\Pic(\Ybar) \hookrightarrow \Pic(\Ybar_p)$~(see, e.g.,~\cite{vLHeron}*{Proposition~6.2}).  We show that $\Pic(\Ybar_{13})$ has rank $2$, and is generated by the components of the pullback of a tritangent line to the branch sextic of the double cover $\Ybar_{13} \to \PP^2_{\Fbar_{13}}$.  Meanwhile, the branch curve of the double cover $\Ybar_{7} \to \PP^2_{\Fbar_{7}}$ does not admit tritangent lines. An application of~\cite{HVA13}*{Proposition 5.3} allows us to conclude that $\Pic(\Ybar) \isom \Z$.

	To check that the branch sextic in $\PP^2_{\Fbar_{13}}$ admits tritangent lines while the branch sextic in $\PP^2_{\Fbar_{7}}$ does not, we use \cite{EJ08}*{Algorithm~8}. We find that $\Ybar_{13}$ is given by
	
	\begin{align*}
		w^2 & = 10(y + 11z)^2(y^2 + 5yz + 8z^2)^2 \\
		& + (x + 5y + z)(x^5 + 3x^4y + x^4z + 11x^3y^2 + 8x^3yz + 9x^3z^2 + 2x^2y^3 \\
		& + 10x^2y^2z + 9x^2yz^2 + x^2z^3 + 8xy^4 + 7xy^3z + 10xy^2z^2 \\
		& + 10xyz^3 + 5xz^4 + 11y^5 + y^4z + 10y^2z^3 + 10yz^4 + z^5),
	\end{align*}

\noindent from which one can see that the line $x + 5y + z = 0$ is tritangent to the branch curve of the double cover. The components of the pullback of this line to $\Ybar_{13}$ generate a rank $2$ sublattice of $\Pic(\Ybar_{13})$.  Let $F(t)$ denote the characteristic polynomial of the Frobenius operator on $\HH^2_{\et}\left(\Ybar_{13},\Q_\ell\right)$ for some prime $\ell \neq 13$.  Set $F_{13}(t) = 13^{-22}F(13t)$; the function {\tt WeilPolynomialOfDegree2K3Surface} in {\tt Magma} computes
\begin{align*}
F_{13}(t) & = \frac{1}{13}(t - 1)^2(13t^{20} + 8t^{19} - 6t^{17} - 3t^{16} + 3t^{15} + 3t^{14} + 8t^{13} \\ 
&- 2t^{11} - 3t^{10} - 2t^9 + 8t^7 + 3t^6 + 3t^5 - 3t^4 - 6t^3 + 8t + 13).
\end{align*}
As the number of roots of $F_{13}(t)$ that are roots of unity gives an upper bound for $\rk\Pic(\Ybar_{13})$ \cite{vL07}*{Corollary~2.3}, we conclude that $\Pic(\Ybar_{13}) \isom \Z^2$ (note that the roots of the degree $20$ factor in $F_{13}(t)$ are not integral, so they cannot be roots of unity). A computation shows that $Y$ has no line tritangent to the branch curve when we reduce modulo $p = 7$. This concludes the proof that $\Pic(\Ybar) \isom \Z$.

For the last claim, recall that the group $\Br_1(Y)/\Br_0(Y)$ is isomorphic to the Galois cohomology group $\HH^1(\Q,\Pic(\Ybar))$.  Since $\Pic(\Ybar) \isom \Z$, we know that $\Pic(\Ybar)$ is a trivial Galois module.  The first cohomology group of a free module with a trivial action by a profinite group being trivial, we deduce that $\Br_1(Y)/\Br_0(Y) = 0$, as desired.
\end{proof}

\subsection{Local Invariants}

In this section we compute the local invariants of the algebra $\alpha$ for our particular surface $Y$. 

\begin{prop}\label{prop: LocInvtsOfEx}
	Let $p \le \infty$ be a prime number. For any $P \in Y(\Q_p)$ we have
	\[
		\inv_p \left( \alpha(P) \right) = 
		\begin{cases}
			0, & \textup{ if } p \ne 3 \\
			\frac{1}{3} \textup{ or } \frac{2}{3}, & \textup{ if } p = 3
		\end{cases} 
	\]
\end{prop}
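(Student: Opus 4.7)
The plan is to split into cases according to the residue characteristic $p$, invoking the three main tools of \S\ref{sec:InvariantComputations} (Lemmas~\ref{lem: PlacesOfRamification} and~\ref{lem: InsolubleCubic}, together with Corollary~\ref{cor: ConstInvtsMildBadRed}) depending on whether $p$ is archimedean, a place of good reduction, a place of mild bad reduction, or the distinguished prime $p=3$.

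First I would dispatch $p = \infty$ immediately: since $\inv_\infty$ takes values in $\tfrac12\Z/\Z$ while $\alpha$ has order $3$, the image of $\inv_\infty \circ \alpha$ lies in $\tfrac12\Z/\Z \cap \tfrac13\Z/\Z = 0$. Next, I would treat the primes of good reduction for $Y$. By the analysis of \S\ref{subsec: PrimesOfBadRed}, these are all primes $p$ outside the explicit list in that subsection, together with $p = 2$ (which is good after the coordinate change exhibited there). For each such $p$, the du Val family $\pi\colon X' \to \PP^2$ and the resulting Stein factorization $\calH \to \calY \to \PP^2$ spread out to an integral model over $\Spec \Z_p$ with $\calY_{\F_p}$ smooth, so Lemma~\ref{lem: PlacesOfRamification} gives $\inv_p\alpha(P) = 0$ for every $P \in Y(\Q_p)$.

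For the remaining primes $p \neq 3$ in the bad-reduction list ($p = 5, 29, 2851$, and the six larger primes), I would apply Corollary~\ref{cor: ConstInvtsMildBadRed}. The task is to verify, one prime at a time, that the singular locus of the reduction $\overline Y_p$ consists of $r < 8$ ordinary double points; given this, the corollary forces $\inv_p\alpha(P) = 0$ on all $P \in Y(\Q_p)$. Concretely, this amounts to a Jacobian-criterion computation modulo each bad prime on the model $w^2 = f(x,y,z)$ (or the equivalent model used at $p=2$): locate the finitely many singular points of the reduced sextic $f \bmod p$, verify each is an ordinary double point by checking nondegeneracy of the Hessian, and confirm the total count is at most $7$. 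This is where I expect the main difficulty, both because some of the primes are enormous (so factoring has already been done, but working in $\F_p$ for $p$ having over $100$ digits stresses standard computer algebra pipelines), and because one has to also check that lifting the singularities from the plane sextic to $\overline Y_p$ does not introduce worse singularities.

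Finally, for $p = 3$, the setup in \S\ref{sec:HPCounterexample} shows $X(\Z/9\Z) = \emptyset$ and hence $X(\Q_3) = \emptyset$. Since $X' = \Bl_T X \to X$ is surjective, $X'(\Q_3) = \emptyset$ as well. Lemma~\ref{lem: InsolubleCubic} therefore applies: for every $P \in Y(\Q_3)$, $\inv_3 \alpha(P) \neq 0$, and combined with the fact that $\alpha$ has order $3$ this forces $\inv_3 \alpha(P) \in \{\tfrac13, \tfrac23\}$. Assembling the four cases yields the stated dichotomy.
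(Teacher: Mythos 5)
Your proposal is correct and follows essentially the same route as the paper: Lemma~\ref{lem: PlacesOfRamification} at primes of good reduction, Corollary~\ref{cor: ConstInvtsMildBadRed} (after verifying $r<8$ ordinary double points) at the bad primes $p\neq 3$, Lemma~\ref{lem: InsolubleCubic} together with $X(\Z/9\Z)=\emptyset$ at $p=3$, and the $\tfrac12\Z/\Z$ observation at the archimedean place. The only cosmetic difference is that you deduce $X'(\Q_3)=\emptyset$ from the surjection $X'=\Bl_T(X)\to X$, whereas the paper cites the Lang--Nishimura lemma for the birational map; both are valid.
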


\begin{proof}
	Whenever $p$ is a finite prime of good reduction for $Y$, we have $\inv_p (\alpha(P)) = 0$ for all $P$ by Lemma \ref{lem: PlacesOfRamification}.

	The fourfold $X$ is constructed so that it is $3$-adically insoluble.  Since $X$ is birational to $X'$, the Lang--Nishimura lemma implies that $X'(\Q_3) = \emptyset$, and thus by Lemma \ref{lem: InsolubleCubic} applied to $p=3$, we have $\inv_3 \left( \alpha(P) \right) \ne 0$ for every point $P \in Y(\Q_3)$. 

	At every prime $p \ne 3$ of bad reduction of $Y$, the singular locus consists of $r < 8$ ordinary double points. A straightforward computer calculation verifies this claim. Hence, Corollary \ref{cor: ConstInvtsMildBadRed} 
	implies that for all such $p$, the invariant $\inv_p(\alpha(P))$ is trivial.
\end{proof}

\begin{remark} Although we are able to conclude the $3$-adic invariant is non-trivial, our approach does not allow us to distinguish between the possibilities of $\frac{1}{3}$ and $\frac{2}{3} \in \frac{1}{3}\Z/\Z$. 
\end{remark}

\begin{proof}[Proof of Theorem \ref{thm: MainThm}]
	Let $Y/\Q$ be the surface considered throughout this section. By Proposition~\ref{prop: K3hasPicRk1}, we know that $\Br_1(Y) = \Br_0(Y)$, and hence the class $\alpha \in \Br(Y)[3]$ is transcendental as long as it is not constant.

	In \S\ref{subsec: LocalPoints}, we verified $Y(\A_{\Q}) \ne \emptyset$. On the other hand, by Proposition \ref{prop: LocInvtsOfEx}, $Y(\A_{\Q})^{\alpha} = \emptyset$, thereby demonstrating \emph{a posteriori} that $\alpha$ is nonconstant.
\end{proof}

\section{Systematic construction of examples}\label{sec:ComputationalTricks}

In this section we outline a sequence of steps, with some commentary, explaining how to organize the computation of examples like the one in \S\ref{sec:HPCounterexample}.  We highlight computational difficulties that must be overcome to make the whole construction feasible.

\subsection*{Step 1: Insoluble plane cubics give rise to insoluble cubic fourfolds} As in the beginning of \S\ref{sec:HPCounterexample}, we begin with a choice of quadrics $Q_1$, $Q_2$, $Q_3$ in $\Z[x_0,\dots,x_5]$ containing the two disjoint planes $\Pi_1 := \{x_0 = x_1 = x_2 = 0\}$ and $\Pi_2 := \{x_3 = x_4 = x_5 = 0\}$ in $\PP^5$. Generators for the ideal of the associated elliptic ruled surface $T$ are obtained by saturating out the product ideal $I(\Pi_1)I(\Pi_2)$ from $\langle Q_1, Q_2, Q_3 \rangle$, and constructing a minimal basis for the resulting ideal. The result is $I(T) = \langle Q_1,Q_2,Q_3,C_1,C_2\rangle$, where $C_1$ and $C_2$ are homogeneous cubic polynomials in $\Z[x_0,\dots,x_5]$.

We carefully choose the quadrics so that $C_1$ and $C_2$ satisfy two key properties: 
\begin{enumerate}
	\item $C_1$, $C_2$ each lie in a three-variable subring of $\Q[x_0,\dots,x_5]$, with disjoint variable sets;
	\item when considering the zero locus of $C_1$ and $C_2$ as plane cubic curves, each curve has no $(\Z/3\Z)$-points. 
\end{enumerate}
We note that a cubic in at least 4 variables always has $(\Z/3\Z)$-points by the Chevalley-Warning theorem, so the restriction on number of variables in $(1)$ above is necessary for $(2)$ to be possible. Moreover, as $\approx 99.259 \%$ of plane cubic curves over $\Q_3$ have $\Q_3$-rational points (\cite{BCF16}*{Theorem 1}), we must sample a large quantity of quadrics to achieve cubics with the desired properties. To construct a $\Q_3$-insoluble cubic fourfold $X$ containing $T$, we proceed as in \S\ref{sec:HPCounterexample}. Taking $\{3C_1 + C_2 = 0\}$ yields a cubic fourfold which is insoluble in $\Z/9\Z$ (and hence over $\Q_3$), but need not be smooth, so we modify by a suitable linear combination of cubics in $\langle Q_1, Q_2, Q_3 \rangle$. This yields a cubic fourfold $X$ that contains a sextic elliptic surface $T$, such that $X(\Q_3) = \emptyset$.

\subsection*{Step 2: Equation for the K3 sextic} To determine the degree $12$ discriminant locus $\{f(x,y,z) = 0\}$ of the morphism $\pi \colon X' := \Bl_T(X) \to \PP^{2}_{[x:y:z]} := |\calI_T(2)|$, we employ an interpolation approach, as the natural Gr\"obner basis elimination computation over characteristic zero is too expensive. In particular, we sample integral coprime pairs $(y,z) = (y_0, z_0)$ of small height; each choice yields a degree 12 polynomial in $x$, which is the specialization $f(x,y_0,z_0)$ of $f(x,y,z)$. Writing $f(x,y,z) = \sum_{i,j,k} a_{ijk}x^i y^j z^k$, each specialization gives $13$ linear relations among the indeterminates $a_{ijk}$, one for each coefficient of $x^i$ in $f(x,y_0,z_0)$. With enough specializations, one may reconstruct $f(x,y,z)$, using Gaussian elimination. The sextic corresponding the K3 surface $Y$ is then determined as the irreducible component of the discriminant locus which is nonsingular (i.e., it is $B_I$ in the notation of~\cite{AHTVA}). 

\subsection*{Step 3: Existence of a smooth model for $Y$ at $p=2$} In order for the K3 surface $Y$ to have good reduction at 2, it is necessary to have a model for $Y$ which over $\F_2$ reduces to an equation of the form $w^2 + wg_1 + g_2 = 0$. If $Y/\Q$ is defined initially by an equation of the form $w^2 = f$ such that over $\F_2$ the equation becomes $\overline{w}^2 = g_1^2$, then we make the rational change of variables $w \mapsto 2w + g_1$. This gives a model for $Y$ of the form $4w^2 + 4wg_1 + g_1^2 = f$. Thus, a sufficient condition to produce a model of the desired form is to additionally require that $f - g_1^2 \equiv 0 \bmod{4}$, as this guarantees that all coefficients are divisible by 4. Dividing out by this factor we obtain a new model for $Y$, whose reduction modulo 2 has the shape described above. Finally, we proceed as in \S\ref{subsec: PrimesOfBadRed} to check for smoothness.

\subsection*{Step 4: Local points at small primes of good reduction} For primes $p \le 22$ and $p = \infty$, test for $\Q_p$ points of $Y/\Q \colon w^2 = f(x_,y,z)$, by evaluating at integers with small absolute value (typically 0 or 1) for $x,y$ and $z$, and determining whether $f$ is a square in $\Q_p$. If this test fails, then it is plausible that $Y$ has no local points; start over.

\subsection*{Step 5: Twist} In Step 2, we produce the smooth plane sextic $f(x,y,z)$ over $\Q$, which gives rise to a K3 surface with defining equation $\delta w^2 = f(x,y,z)$. As discussed in \S\ref{sec:HPCounterexample}, to determine $\delta$ it suffices to consider any smooth fiber $S_{[x_0, y_0, z_0]} = \pi^{-1}([x_0, y_0, z_0])$ of $\pi \colon X' \to \PP^2$. In particular, $\delta = 1$ (up to squares), if and only if each of the skew triples of $(-1)$-curves is defined over the extension $K_0 := \Q(\sqrt{f(x_0, y_0, z_0)})$.

We assume now that $S := S_{[x_0, y_0, z_0]}$ is anticanonically embedded in $\PP^6_{[X_0, \dots, X_6]}$, cut out by the vanishing of 9 quadrics. Define the degree 0 graded $\Q$-homomorphism
\begin{eqnarray*} 
	\phi \colon \Q[X_0, \dots, X_6] & \to & \Q[s,t] \\
	X_i & \mapsto & a_i s + b_it,
\end{eqnarray*}
and consider $a_0, \dots, a_6, b_0, \dots, b_6$ as indeterminates, subject to the restriction that the rank of the matrix
\begin{equation} \label{eqn: matrix}
	\begin{pmatrix} 
		a_0 & a_1 & a_2 & a_3 & a_4 & a_5 & a_6 \\
		b_0 & b_1 & b_2 & b_3 & b_4 & b_5 & b_6
	\end{pmatrix} 
\end{equation}
is maximal. Substitute the expressions $X_i = a_i s + b_i t$ into each quadric, and consider each result as the zero polynomial in the variables $s,t$ with coefficients in $R := \Q[a_0, \dots, a_6, b_0 \dots, b_6]$. That is, apply $\phi$ to the ideal $I_S$ of quadrics defining $S$ to obtain an ideal in $R$. Let $I_{\vec{a}, \vec{b}}$ be the homogeneous ideal obtained by saturating this ideal by the ideal of $2 \times 2$ minors of ~\eqref{eqn: matrix}. Let $I_S^e$ and $I_{\vec{a}, \vec{b}}^e$ be the respective extensions of $I_S$ and $I_{\vec{a}, \vec{b}}$ to the ring \[ \Q[X_0, \dots, X_6, a_0, \dots, a_6, b_0 \dots, b_6,s,t]. \]
Eliminating all variables except $X_0, \dots, X_6$ from the ideal 
\[ I_S^e + I_{\vec{a}, \vec{b}}^e + \langle X_0 - (a_0 s + b_0 t), \dots, X_6 - (a_6 s + b_6 t)\rangle \]
gives an ideal whose contraction to $\Q[X_0, \dots, X_6]$ is a homogeneous ideal cutting out precisely the scheme of $(-1)$-curves in $S$. After base extension to $K_0$, compute the primary decomposition of this ideal. If the corresponding irreducible components of the scheme of $(-1)$-curves are not of degree 3, start over; $\delta \ne 1$, and we cannot determine the twist using this approach.

\subsection*{Step 6: Geometric Picard rank 1} Let $C$ be the smooth plane curve in $\PP^2$ defined by the vanishing of the K3 sextic $f(x,y,z)$. Compute \[ S := \{ p \mid 5 \le p \le 100 \textup{ a prime of good reduction for } C \}. \]
Use \cite{EJ08}*{Algorithm 8} to compute the subset $S_\textup{tri} \subset S$ consisting of those primes $p$ for which there exists a line tritangent to $C$ over $\F_p$. If either $S_\textup{tri}$ or $S \setminus S_\textup{tri}$ are empty, start over. Otherwise, starting with the smallest prime $p$ in $S_\textup{tri}$, compute the characteristic polynomial $F(t)$ of Frobenius on $H^2_{\et}(\Ybar_p, \Q_\ell)$ for some prime $\ell \ne p$, using the {\tt Magma} function {\tt WeilPolynomialOfDegree2K3Surface}. The number of roots of unity of $F_p(t) := p^{-22} F(pt)$ gives an upper bound on $\rk \Pic \Ybar_p$. If this exceeds 2 for all primes in $S_\textup{tri}$, start over; we cannot verify geometric Picard rank 1 for this surface.

\subsection*{Step 7: Primes of bad reduction} Proceeding as in \S\ref{subsec: PrimesOfBadRed}, compute the integer $m$ whose prime factors comprise the primes of bad reduction for the sextic $f(x,y,z)$ (and hence for the K3 surface $Y$). Monomial ordering plays a crucial role in this step; grevlex is used for efficient Gr\"obner basis calculations. The integer $m$ contained in the Gr\"obner basis with respect to grevlex ordering is necessarily contained in the ideal generated by a Gr\"obner basis $G'$ in lexicographic monomial ordering, which can be used for elimination. Thus $m \in G' \cap \Z = (m')$, where $m'$ is the integer whose prime factors are the primes of bad reduction. Hence $m' \mid m$, and thus the primes of bad reduction divide $m$, which suffices for our purposes.

\subsection*{Step 8: Computations at primes of bad reduction} At the places of bad reduction, check for local points, as in Step 4. Determine the (geometric) singular locus. If at any prime in question the locus does not consist of $r < 8$ ordinary double points, then start over.

\bibliographystyle{plain}
	\begin{bibdiv}
		\begin{biblist}

		\bib{AHTVA}{article}{
			Author = {Addington, N.},
			Author = {Hassett, B.},
			Author =  {Tschinkel, Yu.},
			Author =  {V\'arilly-Alvarado, A.},
			Title = {Cubic fourfolds fibered in sextic del {P}ezzo surfaces},
			Year = {2016},
			Eprint = {arXiv:1606.05321},
		}

		\bib{BCF16}{article}{
			AUTHOR = {Bhargava, M.},
			Author = {Cremona, J.},
			Author = {Fisher, T.},
     		TITLE = {The proportion of plane cubic curves over {$\mathbb{Q}$} that
        	      everywhere locally have a point},
   			JOURNAL = {Int. J. Number Theory},
    		VOLUME = {12},
      		YEAR = {2016},
   			NUMBER = {4},
     		PAGES = {1077--1092},
      		ISSN = {1793-0421}
      	}

		\bib{MAGMA}{article}{
    		AUTHOR = {Bosma, W.},
    		Author = {Cannon, J.},
    		Author = {Playoust, C.},
     		TITLE = {The {M}agma algebra system. {I}. {T}he user language},
      		NOTE = {Computational algebra and number theory (London, 1993)},
   			JOURNAL = {J. Symbolic Comput.},
    		VOLUME = {24},
      		YEAR = {1997},
    		NUMBER = {3-4},
     		PAGES = {235--265},
      		ISSN = {0747-7171},
		}

		\bib{CTKM07}{article}{
			AUTHOR = {Colliot-Th\'el\`ene, J.L},
			Author =  {Karpenko, N. A.},
			Author =  {Merkurjev, A. S.},
			TITLE = {Rational surfaces and the canonical dimension of the group
			{${\rm PGL}_6$}},
			JOURNAL = {Algebra i Analiz},
			VOLUME = {19},
			YEAR = {2007},
			NUMBER = {5},
			PAGES = {159--178},
			ISSN = {0234-0852}
		}

		\bib{CTS}{article}{
			AUTHOR = {Colliot-Th\'el\`ene, J.-L.},
			AUTHOR = {Skorobogatov, A. N.},
     		TITLE = {Good reduction of the {B}rauer-{M}anin obstruction},
   			JOURNAL = {Trans. Amer. Math. Soc.},
    		VOLUME = {365},
      		YEAR = {2013},
    		NUMBER = {2},
     		PAGES = {579--590},
      		ISSN = {0002-9947}
		}

		\bib{Corn}{article}{
			AUTHOR = {Corn, P.},
			TITLE = {Del {P}ezzo surfaces of degree 6},
			JOURNAL = {Math. Res. Lett.},
			VOLUME = {12},
			YEAR = {2005},
			NUMBER = {1},
			PAGES = {75--84},
			ISSN = {1073-2780},
		}

		\bib{CN17}{article}{
			Author = {Corn, P.},
			Author = {Nakahara, M.},
			Title = {Brauer--{M}anin obstructions on genus-2 K3 surfaces},
			Year = {2017},
			Eprint = {arXiv:1710.11116},
		}

		\bib{EJ08}{article}{
		   author={Elsenhans, A.-S.},
		   author={Jahnel, J.},
		   title={$K3$ surfaces of Picard rank one and degree two},
		   conference={
		      title={Algorithmic number theory},
		   },
		   book={
		      series={Lecture Notes in Comput. Sci.},
		      volume={5011},
		      publisher={Springer, Berlin},
		   },
		   date={2008},
		   pages={212--225},
		}

		\bib{EJ11}{article}{
		   author={Elsenhans, A.-S.},
		   author={Jahnel, J.},
		   title={The Picard group of a $K3$ surface and its reduction modulo $p$},
		   journal={Algebra Number Theory},
		   volume={5},
		   date={2011},
		   number={8},
		   pages={1027--1040},
		   issn={1937-0652},
		}

		\bib{EJ13}{incollection}{
    		AUTHOR = {Elsenhans, A.-S. and Jahnel, J.},
     		TITLE = {Experiments with the transcendental {B}rauer-{M}anin
        	      obstruction},
 			BOOKTITLE = {A{NTS} {X}---{P}roceedings of the {T}enth {A}lgorithmic
        	      {N}umber {T}heory {S}ymposium},
    		SERIES = {Open Book Ser.},
    		VOLUME = {1},
     		PAGES = {369--394},
 			PUBLISHER = {Math. Sci. Publ., Berkeley, CA},
      		YEAR = {2013},
      	}

		\bib{Has99}{article}{
		   author={Hassett, B.},
		   title={Special cubic fourfolds},
		   journal={Compositio Math.},
		   volume={120},
		   date={2000},
		   number={1},
		   pages={1--23},
		   issn={0010-437X},
		}

		\bib{HVAV11}{article}{
		   author={Hassett, B.},
		   author={V\'arilly-Alvarado, A.},
		   author={Varilly, P.},
		   title={Transcendental obstructions to weak approximation on general K3
		   surfaces},
		   journal={Adv. Math.},
		   volume={228},
		   date={2011},
		   number={3},
		   pages={1377--1404},
		   issn={0001-8708},
		}

		\bib{HVA13}{article}{
			AUTHOR = {Hassett, B.},
			Author = {V\'arilly-Alvarado, A.},
     		TITLE = {Failure of the {H}asse principle on general {$K3$} surfaces},
   			JOURNAL = {J. Inst. Math. Jussieu},
    		VOLUME = {12},
      		YEAR = {2013},
    		NUMBER = {4},
     		PAGES = {853--877},
      		ISSN = {1474-7480},
		}

		\bib{Huybrechts}{article}{
    		AUTHOR = {Huybrechts, D.},
     		TITLE = {The {K}3 category of a cubic fourfold},
   			JOURNAL = {Compos. Math.},
    		VOLUME = {153},
      		YEAR = {2017},
    		NUMBER = {3},
     		PAGES = {586--620},
      		ISSN = {0010-437X},
		}

		\bib{Ieronymou}{article}{
			AUTHOR = {Ieronymou, E.},
     		TITLE = {Diagonal quartic surfaces and transcendental elements of the
        	      {B}rauer groups},
   			JOURNAL = {J. Inst. Math. Jussieu},
    		VOLUME = {9},
      		YEAR = {2010},
    		NUMBER = {4},
     		PAGES = {769--798},
      		ISSN = {1474-7480},
      	}

		\bib{IS15}{article}{
    		AUTHOR = {Ieronymou, E.},
    		AUTHOR = {Skorobogatov, A. N.},
     		TITLE = {Odd order {B}rauer-{M}anin obstruction on diagonal quartic
        	      surfaces},
   			JOURNAL = {Adv. Math.},
   			VOLUME = {270},
      		YEAR = {2015},
     		PAGES = {181--205},
      		ISSN = {0001-8708},
		}

		\bib{KMRT}{book}{
			AUTHOR = {Knus, M.-A.},
			AUTHOR = {Merkurjev, A.},
			AUTHOR = {Rost, M.},
            AUTHOR = {Tignol, J.-P.},
     		TITLE = {The book of involutions},
    		SERIES = {American Mathematical Society Colloquium Publications},
    		VOLUME = {44},
      		NOTE = {With a preface in French by J. Tits},
 			PUBLISHER = {American Mathematical Society, Providence, RI},
     		YEAR = {1998},
     		PAGES = {xxii+593},
      		ISBN = {0-8218-0904-0}
    	}

		\bib{Kollar}{article}{
			Author = {Koll\'ar, J.},
			Title = {Severi-{B}rauer varieties; a geometric treatment},
			Year = {2016},
			Eprint = {arXiv:1606.04368}
		}

		\bib{Kuz17}{article}{
			Author = {Kuznetsov, A.},
			Title = {Derived categories of families of sextic del {P}ezzo surfaces},
			Year = {2017},
			Eprint = {arXiv:1708.00522},
		}

		\bib{LP81}{article}{
			AUTHOR = {Looijenga, E.},
			AUTHOR = {Peters, C.},
     		TITLE = {Torelli theorems for {K}\"ahler {$K3$}\ surfaces},
   			JOURNAL = {Compositio Math.},
    		VOLUME = {42},
      		YEAR = {1980/81},
   			NUMBER = {2},
     		PAGES = {145--186},
		}

		\bib{Man71}{incollection}{
			AUTHOR = {Manin, Yu. I.},
     		TITLE = {Le groupe de {B}rauer-{G}rothendieck en g\'eom\'etrie
        	      diophantienne},
 			BOOKTITLE = {Actes du {C}ongr\`es {I}nternational des {M}ath\'ematiciens
        	      ({N}ice, 1970), {T}ome 1},
     		PAGES = {401--411},
 			PUBLISHER = {Gauthier-Villars, Paris},
      		YEAR = {1971},
		}

		\bib{MSTVA}{incollection}{
    		AUTHOR = {McKinnie, K.},
    		Author = {Sawon, J.},
    		Author = {Tanimoto, S.},
    		Author = {V\'arilly-Alvarado, A.},
     		TITLE = {Brauer groups on {K}3 surfaces and arithmetic applications},
 			BOOKTITLE = {Brauer groups and obstruction problems},
    		SERIES = {Progr. Math.},
    		VOLUME = {320},
     		PAGES = {177--218},
 			PUBLISHER = {Birkh\"auser/Springer, Cham},
      		YEAR = {2017},
		}

		\bib{Preu}{incollection}{
    		AUTHOR = {Preu, T.},
     		TITLE = {Example of a transcendental 3-torsion {B}rauer-{M}anin
              obstruction on a diagonal quartic surface},
 			BOOKTITLE = {Torsors, \'etale homotopy and applications to rational points},
    		SERIES = {London Math. Soc. Lecture Note Ser.},
    		VOLUME = {405},
    		PAGES = {447--459},
 			PUBLISHER = {Cambridge Univ. Press, Cambridge},
      		YEAR = {2013},
		}

		\bib{SZ16}{article}{
			AUTHOR = {Skorobogatov, A. N.},
			AUTHOR = {Zarhin, Yu. G.},
			Title = {Kummer varieties and their {B}rauer groups},
			Year = {2016},
			Eprint = {arXiv:1612.05993},
		}

		\bib{vG05}{article}{
    		AUTHOR = {van Geemen, B.},
     		TITLE = {Some remarks on {B}rauer groups of {$K3$} surfaces},
   			JOURNAL = {Adv. Math.},
    		VOLUME = {197},
      		YEAR = {2005},
    		NUMBER = {1},
     		PAGES = {222--247},
      		ISSN = {0001-8708},
		}

		\bib{vLHeron}{article}{
		   author={van Luijk, R.},
		   title={An elliptic $K3$ surface associated to Heron triangles},
		   journal={J. Number Theory},
		   volume={123},
		   date={2007},
		   number={1},
		   pages={92--119},
		   issn={0022-314X},
		}

		\bib{vL07}{article}{
		   author={van Luijk, R.},
		   title={K3 surfaces with Picard number one and infinitely many rational
		   points},
		   journal={Algebra Number Theory},
		   volume={1},
		   date={2007},
		   number={1},
		   pages={1--15},
		   issn={1937-0652},
		}

		\bib{LeidenLectures}{article}{
   			author={V{\'a}rilly-Alvarado, A.},
   			title={Arithmetic of del Pezzo surfaces},
   			conference={
      		title={Birational geometry, rational curves, and arithmetic},
   			},
   			book={
      		publisher={Springer, New York},
   			},
   			date={2013},
   			pages={293--319},
		}

		\bib{Wittenberg}{incollection}{,
    		AUTHOR = {Wittenberg, O.},
     		TITLE = {Transcendental {B}rauer-{M}anin obstruction on a pencil of
        	      elliptic curves},
 			BOOKTITLE = {Arithmetic of higher-dimensional algebraic varieties ({P}alo
        	      {A}lto, {CA}, 2002)},
    		SERIES = {Progr. Math.},
    		VOLUME = {226},
     		PAGES = {259--267},
 			PUBLISHER = {Birkh\"auser Boston, Boston, MA},
      		YEAR = {2004}
		}

		\end{biblist}
	\end{bibdiv}

\end{document}